\def\imod#1{\allowbreak\mkern10mu({\operator@font mod}\,\,#1)}
\renewcommand\section{\@startsection{section}{1}{\z@}%
                                  {-3.5ex \@plus -1ex \@minus-.2ex}%
                                  {2.3ex \@plus.2ex}%
                                  {\center\normalfont\large\bfseries}}
\theoremstyle{plain}
\newtheorem{Thm}{Theorem}[section]
\newtheorem{Cor}[Thm]{Corollary}
\newtheorem{Lem}[Thm]{Lemma}
\newtheorem{Ex}[Thm]{Example}
\theoremstyle{definition}
\newtheorem{Remark}[Thm]{Remark}
\theoremstyle{plain}
\numberwithin{equation}{section}
\newcounter{todocounter}
\newcommand{\todonum}[1]{\stepcounter{todocounter}\todo{\thetodocounter: #1}}
\providecommand\@dotsep{5}
\renewcommand{\listoftodos}[1][\@todonotes@todolistname]{%
  \@starttoc{tdo}{#1}}
\DeclareMathAlphabet{\mathpzc}{OT1}{pzc}{m}{it}
\newcounter{qcounter}
\newcommand{\C}{\mathbb{C}}
\newcommand{\Z}{\mathbb{Z}}
\DeclareMathOperator{\Hom}{Hom}
\DeclareMathOperator{\Image}{Im}
\DeclareMathOperator{\Ind}{Ind}
\DeclareMathOperator{\Ker}{Ker}
\DeclareMathOperator{\rk}{rank}
\DeclareMathOperator{\St}{St}
\newcommand{\Card}[1]{\left\vert #1\right\vert} %cardinality
\DeclareMathOperator{\zfun}{\zeta} %Zeta-function
\newcommand{\coset}[1]{\left[ #1 \right]}  %square brackets
\newcommand{\FNorm}[1]{\left\vert #1 \right\vert} %Norm in a local field/absolute value
\newcommand{\R}{\mathbb{R}}
\newcommand{\N}{\mathbb{N}}
\newcommand{\bk}[1]{\left(#1\right)} %brackets
\newcommand{\bm}{\begin{multline*}}
\newcommand{\tu}{\end{multline*}}
\DeclareMathOperator{\Id}{\mathbf{1}} %identity element 1
\newcommand{\Gm}{\mathbb{G}_m} %multiplicative group
\newcommand{\modf}[1]{\mathcal{\delta}_{#1}} %modular function of a group
\renewcommand{\check}[1]{#1 ^{\vee}} %for coroots
\newcommand{\set}[1]{\left\{ #1 \right\}} %sets
\newcommand{\mvert}{\mathrel{}\middle\vert\mathrel{}} %midle vert line inside a set
\newcommand{\res}[1]{\Bigg\vert_{#1}}
\newcommand{\lmod}{\backslash}
\newcommand{\gen}[1]{\left< #1 \right>}
\newcommand{\Stab}{\operatorname{Stab}}
\newcommand{\intl}{\, \int\limits}
\newcommand{\suml}{\, \sum\limits}
\newcommand{\prodl}{\, \prod\limits}
\newcommand{\restline}{\mathcal{S}}
\newcommand{\newreptheorem}[2]{\newtheorem*{rep@#1}{\rep@title}\newenvironment{rep#1}[1]{\def\rep@title{#2 \ref*{##1}}\begin{rep@#1}}{\end{rep@#1}}}
\renewcommand\AA{\mathbb{A}} % (don't need Swedish Å)
\newcommand\BB{\mathbb{B}}
\newcommand\GG{\mathbb{G}}
\newcommand\NN{\mathbb{N}}
\newcommand\PP{\mathbb{P}}
\newcommand\TT{\mathbb{T}}
\DeclareMathAlphabet{\mathcal}{OMS}{cmsy}{m}{n}
\newcommand\cO{\mathcal{O}}
\newcommand\fraka{\mathfrak{a}}
\newcommand\GA{{\GG\bk{\AA}}}
\newcommand\BA{{\BB\bk{\AA}}}
\newcommand\IPoG{\Ind_{P_0}^{G}}
\newcommand\IPoGl{\IPoG\lambda}
\newcommand\IBG{\Ind_B^G}
\newcommand\IBGl{\IBG\lambda}
\newcommand\IBGlo{\IBG\lambda_0}
\newcommand\IPG{\Ind_P^{G}}
\newcommand\Homc{\Hom_{\textrm{cts}}}
\newcommand\Homu{\Hom_{\textrm{ur}}}
\newcommand\ars{\fraka_\R^\ast}
\newcommand\acs{\fraka_\C^\ast}
\newcommand\aMcs{\fraka_{M,\C}^\ast}
\newcommand\tr{\operatorname{\mathbf{1}}}  % trivial representation
\title[Singrularities and Principal Series Representations]{Singularities of Intertwining Operators and Decompositions of Principal Series Representations}
\author{Taeuk Nam${}^{1}$}
\email{taeuk.nam@alumni.ubc.ca}
\author{Avner Segal${}^{1,2}$}
\email{avners@post.tau.ac.il}
\author{Lior Silberman${}^{1}$}
\email{lior@math.ubc.ca}
\address{(1) Department of Mathematics, University of British Columbia, Vancouver, BC, V6T 1Z2, Canada}
\address{(2) Department of Mathematics, Bar Ilan University, Ramat Gan 5290002, Israel}
\begin{document}

\begin{abstract}
In this paper, we show that, under certain assumptions,
a parabolic induction $\IBGl$ from the Borel subgroup $B$ of
a (real or $p$-adic) reductive group $G$ decomposes into a direct sum
of the form:
\[
\IBGl = \bk{\IPG \St_M\otimes \chi_0} \oplus \bk{\IPG \tr_M\otimes \chi_0},
\]
where $P$ is a parabolic subgroup of $G$ with Levi subgroup $M$ of
semi-simple rank $1$, $\tr_M$ is the trivial representation of $M$,
$\St_M$ is the Steinberg representation of $M$ and $\chi_0$ is a certain
character of $M$.
We construct examples of this phenomenon for all simply-connected simple
groups of rank at least $2$.
\end{abstract}

%\subjclass[2010]{22E46, 22E50, 22E30,  22E35, 47G10}
\subjclass[2010]{22E50, 47G10, 22E46}

\maketitle

\tableofcontents

\section{Introduction}

Fixing our notation, let $F$ be a local field, $\GG$ a reductive $F$-group.
We write $G = \GG(F)$ in the analytic topology, and more generally use
roman letters to denote the set of $F$-points of the correpsonding
algebraic subgroup.  Accordingly let $P_0\subset G$ be a minimal parabolic
subgroup (formally $P_0 = \PP_0(F)$ where $\PP_0 \subset \GG$ is a minimal
parabolic subgroup defined over $F$, and similarly for other subgroups),
and let $T\subset P_0$ be a Levi subgroup.
The principal series of representations of $G$ consists of
the admissible representations $\IPoGl$ (normalized induction) as $\lambda$
varies over the characters $\Homc\bk{T,\C^\times}$.

Understanding the structure of these representations is a basic problem in the
representation theory of $G$.  Common questions about the structure include:
\begin{itemize}
	\item Is $\IPoGl$ reducible?
	\item What is the length of its composition series?
	\item What are the composition factors?  At least the irreducible subrepresentations and quotients?
	\item What is the composition series?
\end{itemize}

%More specifically, representations with specific 

We specialize to the case of a quasi-split Chevalley group $\GG$
defined over $F$, in which $P_0 = B$ is a Borel subgroup and $T$ is
a maximal torus of $B$ of maximal split $F$-rank.
We may as well assume $\rk\bk{G}>1$.
Let $\acs = X^\ast\bk{T}\otimes\C^\times=\Homu\bk{T,\C^\times}$
be the space of unramified quasicharacters of $T$.

Fixing $\lambda_0\in\acs$, we study the induced representation $\IBGl$.
We prove (\Cref{Thm_Main}) that, under certain assumptions on $\lambda_0$,
the representation $\IBGl$ decomposes as the direct sum
\begin{equation}
\label{eq:introduction_decomposition}
\IBGlo = \bk{\IPG \St_M\otimes \chi_0} \oplus \bk{\Ind_P^G \tr_M\otimes \chi_0},
\end{equation}
where:
\begin{itemize}
\item $\PP$ is a parabolic subgroup of $G$ with Levi subgroup $M$ of semi-simple rank $1$.
\item $\tr_M$ (resp.\ $\St_M)$ is the trivial (resp.\ Steinberg) representation of $M$.
\item $\chi_0$ is a character of $M$ associated to the induction in stages from $B$ to $M$.
\end{itemize}

In fact, \Cref{Thm_Main} identifies the two invariant subspaces isomorphic to
$\IPG \St_M\otimes \chi_0$ and $\IPG \tr_M\otimes \chi_0$ as eigenspaces
of a certain intertwining operator.  Furthermore, this shows that each of
the two admits a unique irreducible subrepresentation.

This decomposition is rather surprising since, for generic $\chi_0$, and the associated $\lambda_0\in\mathfrak{a}_\C^\ast$, only one of the following exact sequences hold
\[
\begin{split}
& \Ind_P^G \St_M\otimes \chi_0 \hookrightarrow \Ind_B^G \lambda_0 \twoheadrightarrow \Ind_P^G \tr_M\otimes \chi_0 \\
& \Ind_P^G \tr_M\otimes \chi_0 \hookrightarrow \Ind_B^G \lambda_0 \twoheadrightarrow \Ind_P^G \St_M\otimes \chi_0 .
\end{split}
\]

The reason that these sequences split as in \Cref{eq:introduction_decomposition} is that $\lambda_0$ lies in the intersection between two singularities of a certain standard intertwining operator $N\bk{w,\lambda}$.
Namely, $N\bk{w,\lambda}$ admits a simple "pole" along a hyperplane $H_1$ and a simple "zero" along a hyperplane $H_2$ such that $\lambda_0 \in H_1\cap H_2$.
In such a case $N\bk{w,\lambda_0}$ is not well defined.
However, we show the existence of a line $\restline$ along which $N\bk{w,\lambda}$ is well-defined and continuous at $\lambda_0$.
The limit of $N\bk{w,\lambda}$ at $\lambda_0$ along $\restline$ is an intertwining operator $E$ of $\Ind_B^G \lambda_0$.
Furthermore, $\Ind_P^G \St_M\otimes \chi_0\oplus\Ind_P^G \tr_M\otimes \chi_0$ is a decomposition of $\Ind_B^G \lambda_0$ into eigenspaces of $E$.

\vspace{0.5cm}

In \Cref{Sec:Examples}, we find an abundant amount of points where the assumptions of \Cref{Thm_Main} are satisfied.
We find distinct such $\lambda_0$ for every $G$ and every Levi subgroup $M$ as above.
In fact, when $rank\bk{G}>2$, we show the existence of infinitely many such $\lambda_0$ (see \Cref{Thm:Key_Example}).
In particular, one has (\Cref{Cor:Key_Example}):

%\begin{repcor}{Cor:Key_Example}
%	\it
	\noindent\textbf{\Cref{Cor:Key_Example}}
	{\it For any simple group $G$ and any simple root $\alpha$, let $w_\alpha \in W$ be the corresponding simple reflection in the Weyl group and let $\omega_\alpha$ be the associated fundamental weight.
	Let $\lambda_0=-w_\alpha\cdot\omega_\alpha$.
	Then
	\begin{equation}
	\Ind_B^G {\lambda_0} = 
	\bk{\Ind_{P}^G \St_M\otimes \chi_0} \oplus 
	\bk{\Ind_{P}^G \tr_M\otimes \chi_0} .
	\end{equation}}
%\end{repcor}

We note here that \Cref{eq:introduction_decomposition} implies that
\begin{equation}
\label{eq:contragredient_decomposition}
\Ind_B^G \bk{-\lambda_0} = \bk{\Ind_P^G \St_M\otimes \bk{-\chi_0}} \oplus \bk{\Ind_P^G \tr_M\otimes \bk{-\chi_0}},
\end{equation}
where we use additive notation for $\acs$.
This, again, is a decomposition into eigenspaces of the limit of $N\bk{w^{-1},\lambda}$ at $-\lambda_0$.
In particular, each of $\Ind_P^G \St_M\otimes \bk{-\chi_0}$ and $\Ind_P^G \tr_M\otimes \bk{-\chi_0}$ admits a unique irreducible quotient and it is easy to find the Langlands operator (in the sense of \cite[Cor. 4.6]{MR1721403} or \cite[Cor. 3.2]{MR2050093}) for each.

One possible application to the results of this paper is to the
computation of the residual spectrum of adelic groups.
Namely, the irreducible subrepresentations of $\IBGl$ can appear as
local constituents of residual representations of $\GA$.
In particular, the eigenvalue of the intertwining opertor on
$\IPG \St_M\otimes \chi_0$ which appears in the proof of \Cref{Thm_Main}
dictates which irreducible subrepresentation of $\Ind_\BA^\GA(\lambda)$
will appear in the residual spectrum.

Such considerations have appeared in the computation of the residual
spectrum of $Sp_4$ (see \cite{MR3267117}), $G_2$ (see \cite{MR1426903}
and \cite{Zampera1997}) and quasi-split forms of $Spin_8$
 (see \cite{LaoResidualSpectrum} and \cite{SegalEisen,SegalResiduesD4}).
It is interesting to note that when $\GG = Sp_4$ the unramified local
constituents appear only in the non-square-integrable automorphic
spectrum as can be seen by comparing \cite[Theorem 5.4]{MR1351833}
with \cite[Theorem 3.6(1)]{MR3267117}.

%{
%\color{blue} Sources to reference to in the introduction:
%
%
%Examples of results obtained in a similar way for relative root system $G_2$ examples: \cite{Zampera1997}
%\cite{LaoResidualSpectrum}
%
%Example of this kind of results for classical groups (obtained using Aubert involutions): \cite{MR1951440}.
%\todonum{Check if those results overlap with ours.}
%
%Application to absolute root system of type $D_4$: \cite{SegalResiduesD4}
%
%Examples for results about unitary ramified characters (we deal with unramified characters in this paper): \cite{MR644669,MR620252}.
%}

This paper is organized as follows:
\begin{itemize}
	\item In \Cref{Section:Preliminaries} we discuss the assumptions we make on the group $G$ and recall the definition and basic properties of the normalized intertwining operators used in this paper.
	\item In \Cref{Section:Levi_Subgroups_of_SS_rank_1} we proof the main result of this paper (\Cref{Thm_Main} and \Cref{Cor:SSRank1WithoutLanglandsPosition}).
	\item In \Cref{Sec:Examples} we study a family of examples of points $\lambda_0$ for which \Cref{Thm_Main} holds. In particular, for any simple group $G$ and any simple root with respect to $T$ we construct a different point $\lambda_0$ which satisfy the assumptions of \Cref{Thm_Main}.
	\item In \Cref{Section:Higher_rank_Levis} we discuss a generalization of \Cref{Thm_Main} and \Cref{Thm:Key_Example} for decompositions with respect to larger Levi subgroups $M$.
	\item In \Cref{Appendix:Facts_on_roots_systems_and_Weyl_groups} we prove a few simple results which did not fit into the body of the paper.
\end{itemize}

\subsection*{Acknowledgements}
This research was supported by the Israel Science Foundation, grant number 421/17 (Segal).
\section{Notation and Preliminaries}\label{Section:Preliminaries}

\subsection{Algebraic groups}
Let $F$ be a local field of characteristic $0$.
Let $\GG$ be a semi-simple group over $F$.

It is known (see the next section) that the following assumption guarantees
certain anayltic properties of normalized intertwining operators.  Accordingly,
while our results likely hold in greater generality we suppose that:
\begin{itemize}
\item If $F$ is Archimedean, assume that $\GG$ is a connected, quasi-split, semi-simple, linear Lie group.
\item If $F$ is non-Archimedean, assume that $\GG$ is a semi-simple Chevalley
group in the sense of \cite[pg. 21]{MR0466335}.
\end{itemize}

The papers \cite[Theorem 5.3]{MR597811} (Archimedean case) and
\cite[Theorem 6.1, p.\ 953]{MR517138} ($p$-adic case) determine the analytic
behaviour of normalized intertwining operators under the hypotheses above.
We believe that these necessary properties hold in greater generlaity;
in any case the assumption on $G$ could be replaced with hypotheses on the
anlytic behaviour of the intertwining operators.

Fix a Borel subgroup and a maximpla $F$-split torus $\GG\supset\BB\supset\TT$.
Also, let $\NN \subset \BB$ be the unipontent radical and let $G = \GG(F)$,
$B = \BB(F)$, $T = \TT(F)$, $N = \NN(F)$.

Let $\Phi=\Phi(\GG:\TT)$ be the set of roots of $\GG$ with respect to $\TT$,
$\Phi^{+}$ the roots occuring in $\NN$, that is the positive roots with respect
to the choice of $\BB$.  Let $\Delta\subset\Phi^{+}$ be the corresponding
set of simple roots.  We denote the relative semisimple rank of $\GG$ by
$n=\Card{\Delta}$.

Recall that $N_G(T)$ surjects onto the Weyl group 
$W = W\bk{\GG:\TT} = N_\GG\bk{\TT}/C_\GG\bk{\TT}$, which is generated by the
involutions $\{s_\alpha\}_{\alpha \in \Delta}$.

Let $X^\ast\bk{T}=\Hom_{F}\bk{\TT,\Gm}\cong \Z^n$ denote the group of
$F$-rational characters of $\TT$.
Let $\ars=X^\ast\bk{T}\otimes_\Z \R =\Homu\bk{T,\R^\times}$ be the space of
unramified real characters of the topological group $T$ and let
$\acs=X^\ast\bk{T}\otimes_\Z \C=\Homu\bk{T,\C^\times}$ be the space of
unramified complex characters of $T$.
The set of fundamental weights
$\set{\omega_\alpha\mvert \alpha\in\Delta} \subset X^\ast\bk{T}$ given by
$\gen{\omega_\alpha,\check{\beta}}=\delta_{\alpha,\beta}$, is basis for
$\ars$, hence gives an identification $\ars\cong\R^n$ and $\acs\cong\C^n$
as vector spaces:
\begin{equation}
\lambda=\bk{s_1,...,s_n} \mapsto \suml_{i=1}^n s_i\cdot\omega_{\alpha_i} .
\end{equation}

Finally we recall the correspondence
\[\begin{array}{ccc}
\set{\Theta\subseteq\Delta} & \longleftrightarrow & \set{\begin{matrix}
\text{Standard parabolic} \\ \text{subgroups of $G$}
\end{matrix}} \\
\Theta & \longrightarrow & P_\Theta=M_\Theta U_\Theta \\
\Delta_M & \longleftarrow & P=MU .
\end{array}\]

For a Levi subgroup $M$ of $G$, let
\begin{equation}
\aMcs=X^\ast\bk{M}\otimes_\Z\C=\Homu\bk{M,\C^\times}.
\end{equation}

Let $K\subset G$ be a maximal compact subgroup (specifically the group
$\GG(\cO_F)$ when $F$ is non-Archimedean, and recall the
\emph{Iwasawa decomposition} $G = PK$ for all parabolic subgroups $P$.

\subsection{Representation Theory and Intertwining Operators}
For any reductive group $M$ we write $\tr_M$ for the trivial representation of $M$.

For $\lambda\in\acs$ we write $\IBGl$ for the (normalized) induction of
$\lambda$ (thought of as a character of $B$) to $G$.  Recall that for all
$w\in W$ we have an intertwining operator
\[
M\bk{w,\lambda}\colon \IBGl \to \IBG\bk{w\cdot\lambda}
\]
defined by analytic continuation of the following integral (which converges
absolutely in the positive Weyl chamber)
\[
M\bk{w,\lambda}f_\lambda\bk{g} = \intl_{N\cap w Nw^{-1}\lmod N} f_\lambda\bk{w^{-1}ug} du .
\]

We collect here some necessary results regarding the intertwining operators;
a more detailed discussion may be found in \cite[sec. 3]{SegalEisen} or \cite[sec. 3]{SegalResiduesD4}.
\begin{itemize}
	\item (\emph{Gindikin--Karpelevich formula})
	Let $f_\lambda^0\in \IBGl$ denote the spherical ($K$-invariant) vector,
	normalized so that $f_\lambda^0\bk{1}=1$.
	Then
	\begin{equation}
	M\bk{w,\lambda}f_\lambda^0 = \bk{\prod_{\alpha>0,\ w\cdot\alpha<0} \frac{\zfun\bk{\gen{\lambda,\check{\alpha}}}}{\zfun\bk{\gen{\lambda,\check{\alpha}}+1}}} f_{w\cdot\lambda}^0 ,
	\end{equation}
	where $\zfun\bk{s}$ is the local $\zeta$-function of $F$.
	
	\item The operators
	\begin{equation}
	N\bk{w,\lambda} = \bk{\prod_{\alpha>0,\ w\cdot\alpha<0} \frac{\zfun\bk{\gen{\lambda,\check{\alpha}}+1}}{\zfun\bk{\gen{\lambda,\check{\alpha}}}}} M\bk{w,\lambda}
	\end{equation}
	(to be called \emph{normalized intertwining operators}) satisfy
	the following cocycle condition:
	\begin{equation}
	\label{Eq_cocycle_condition}
	\forall w,w'\in W:N\bk{ww',\lambda} = N\bk{w,w'\cdot\lambda}\circ N\bk{w',\lambda}\,.
	\end{equation}
	By construction we clearly have:
	\begin{equation}
	N\bk{w,\lambda} f_\lambda^0 = f_{w\cdot\lambda}^0 .
	\end{equation}
	
	\item \textit{(Induction in stages)}
	Given a simple reflection $w_\alpha$, $N\bk{w_\alpha,\lambda}$ factors through induction in stages.
	Namely, given the embedding $\iota_\alpha:SL_2\bk{F}\to G$ associated to the simple root $\alpha$, the following diagram is commutative:
	\begin{equation}
	\label{Eq_induction_in_stages}
	\xymatrix@C=6em{
	\Ind_B^G\lambda \ar@{->}[r]^{N\bk{w_\alpha,\lambda}} \ar@{->}[d]_{\iota_\alpha^\ast} &
	\Ind_B^G\bk{w_\alpha\cdot\lambda} \ar@{->}[d]^{\iota_\alpha^\ast} \\
	\Ind_{\mathcal{B}}^{SL_2\bk{F}} \bk{\gen{\lambda,\check{\alpha}}} \ar@{->}[r]^{N\bk{w_{\square},\gen{\lambda,\check{\alpha}}}} &
	\Ind_{\mathcal{B}}^{SL_2\bk{F}} \bk{\gen{w_{\square}\cdot \lambda,\check{\alpha}}}
	} ,
	\end{equation}	
	where $\mathcal{B}$ is the standard Borel subgroup $SL_2\bk{F}$, $w_{\square}=\begin{pmatrix}1&0\\0&-1\end{pmatrix}$ is the non-trivial Weyl element of $SL_2\bk{F}$ and the vertical maps in the diagram should be understood as the pull-back map.
	
	\item \textit{(Representations of $SL_2\bk{F}$)}
	We consider the representation $\pi_{s}=\Ind_{\mathcal{B}}^{SL_2\bk{F}} \FNorm{\omega}^s$, where $\omega$ is the unique fundamental weight on the torus of $SL_2\bk{F}$.
	The representation $\pi_{s}$ is irreducible for $s\neq \pm 1$.
	For $s=\pm 1$ we have the following exact sequences
	\begin{equation}
	\begin{array}{l}
	0\longrightarrow \Id_{SL_2\bk{F}} \longrightarrow \Ind_{\mathcal{B}}^{SL_2\bk{F}} \FNorm{\omega}^{-1} \longrightarrow \St_{SL_2\bk{F}} \longrightarrow 0 \\
	0\longrightarrow \St_{SL_2\bk{F}} \longrightarrow \Ind_{\mathcal{B}}^{SL_2\bk{F}} \FNorm{\omega}^{+1} \longrightarrow \Id_{SL_2\bk{F}} \longrightarrow 0 ,
	\end{array}
	\end{equation}
%	\begin{align*}
%	& 0\longrightarrow \Id \longrightarrow \Ind_{\mathcal{B}}^{SL_2\bk{F}} \FNorm{\omega}^{-1} \longrightarrow \St \longrightarrow 0 \\
%	& 0\longrightarrow \St \longrightarrow \Ind_{\mathcal{B}}^{SL_2\bk{F}} \FNorm{\omega}^{+1} \longrightarrow \Id \longrightarrow 0 ,
%%	& 0\longrightarrow \Id \longrightarrow \Ind_{\mathcal{B}}^{SL_2\bk{F}} \FNorm{\omega}^{-1} \overset{N\bk{w_{\square},-1}}{\longrightarrow} St \longrightarrow 0 \\
%%	& 0\longrightarrow St \longrightarrow \Ind_{\mathcal{B}}^{SL_2\bk{F}} \FNorm{\omega}^{+1} \overset{N\bk{w_{\square},+1}}{\longrightarrow} \Id \longrightarrow 0 ,
%	\end{align*}
	where $\St_{SL_2\bk{F}}$ denotes the Steinberg representations of $SL_2\bk{F}$.
	Note that these sequences do not split.
	
	Furthermore, writing the Laurent series of $N\bk{w_{\square},s}$ around $s=-1$ and $s=+1$ yields
	\begin{equation}
	\label{Eq_Laurent_series_intertwining_operators}
	\begin{array}{l}
	N\bk{w_{\square},s} = \sum_{i=0}^\infty \bk{s+1}^i \mathcal{A}_i \\
	N\bk{w_{\square},s} = \sum_{i=-1}^\infty \bk{s-1}^i \mathcal{C}_i,
	\end{array}
	\end{equation}
	where 
	\begin{equation}
	\label{Eq_coefficients_Laurent_series_intertwining_operators}
	\begin{array}{l}
	Im\bk{\mathcal{A}_0} = \Id_{SL_2\bk{F}}, \quad
	Ker\bk{\mathcal{A}_0} = \St_{SL_2\bk{F}} \\
	Im\bk{\mathcal{C}_{-1}} = \St_{SL_2\bk{F}}, \quad
	Ker\bk{\mathcal{C}_{-1}} = \Id_{SL_2\bk{F}} .
	\end{array}
	\end{equation}
\end{itemize}

\subsection{The Langlands Subrepresentation Theorem}

We recall here the Langlands subrepresentation theorem.
See \cite[Chapter IV, Sec. XI.2]{MR1721403}, \cite{MR2050093} or \cite{MR2490651} for more details.
%\cite[Konno, Kazuko - Irreducible Representations of $GSp_4(F)$ over a $p$-adic Field $F$]{furusawa2006automorphic}
Note that most sources describe the quotient version of the Langlands classification theorem rather than the subrepresentation version we use here.
By taking contragredients, the two versions are equivalent.

Let $Q$ be a standard parabolic subgroup of $G$ with Levi subgroup $L$.
Let 
\[
\mathfrak{a}_L^+=\set{\lambda\in\mathfrak{a}_{L,\R}^\ast \mvert \gen{\lambda,\check{\alpha}}<0\ \forall \alpha\in\Delta_L}.
\]
A representation $\sigma$  of $L$ is called \textbf{tempered} if $\sigma$ is a direct summand in a parabolic induction from a square-integrable representation.

A \textbf{standard module} is an induction $\Ind_Q^G \bk{\sigma\otimes\lambda}$, where $\sigma$ is a tempered representation of $L$ and $\lambda\in \mathfrak{a}_L^+$.

\begin{Thm}[\cite{MR2050093} Lem. 2.4]
	\label{Thm:Langlands_Subrepresentation_theorem}
	Let $\Ind_Q^G \bk{\sigma\otimes\lambda}$ be a standard module.
	Then $\Ind_Q^G \bk{\sigma\otimes\lambda}$ admits a unique irreducible subrepresentation $\tau$ and $\tau$ is the kernel of $N\bk{w_L,\lambda}$, where $w_L$ is the shortest representative in $W$ of the class of the longest element in $W_L\lmod W$.
\end{Thm}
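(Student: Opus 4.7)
The plan is to derive this subrepresentation version of the Langlands classification from the better-known quotient form by contragredient duality. The quotient form asserts that if $\lambda' \in -\mathfrak{a}_L^+$ lies in the open positive Weyl chamber, then the standard module $\Ind_Q^G\bk{\sigma \otimes \lambda'}$ admits a unique irreducible quotient, realized as the image of the standard intertwining operator sending this module into the representation parabolically induced from the opposite parabolic $\bar Q$.

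First, I would verify the contragredient dictionary: taking smooth duals sends $\Ind_Q^G\bk{\sigma \otimes \lambda}$ to $\Ind_{\bar Q}^G\bk{\sigma^\vee \otimes \bk{-\lambda}}$ and, after conjugating $\bar Q$ back to standard position by $w_L$ (which takes the simple roots of $\bar Q$ to those of $Q$), to a standard module $\Ind_Q^G\bk{\sigma' \otimes \bk{-w_L\lambda}}$ with $\sigma'$ tempered and $-w_L\lambda$ in the open positive chamber. Since taking contragredients preserves temperedness and interchanges subrepresentations with quotients, the assertion about the unique irreducible subrepresentation of $\Ind_Q^G\bk{\sigma \otimes \lambda}$ for $\lambda \in \mathfrak{a}_L^+$ becomes the standard Langlands quotient statement for the dual.

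Second, the identification $\tau = \mathrm{Ker}\bk{N\bk{w_L,\lambda}}$ uses compatibility of the intertwining operators with contragredients: up to the explicit normalization factors, the transpose of $N\bk{w_L,\lambda}$ is the normalized intertwining operator realizing the Langlands quotient on the dual module, so the kernel of one corresponds to the image of the other. The analytic input required is that $N\bk{w_L,\lambda}$ be regular on $\mathfrak{a}_L^+$; this follows from \cite[Theorem 5.3]{MR597811} and \cite[Theorem 6.1]{MR517138}, since for $\lambda$ in this chamber every pairing $\gen{\lambda,\check\alpha}$ with $\alpha > 0$, $w_L\cdot\alpha < 0$ satisfies $\mathrm{Re}\bk{\gen{\lambda,\check\alpha}} < 0$, and the normalizing factors $\zfun\bk{\gen{\lambda,\check\alpha}+1}/\zfun\bk{\gen{\lambda,\check\alpha}}$ then precisely cancel the poles of the unnormalized $M\bk{w_L,\lambda}$.

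The main obstacle, beyond the sign bookkeeping in the contragredient translation, is showing that $\mathrm{Ker}\bk{N\bk{w_L,\lambda}}$ is exactly the unique irreducible subrepresentation, rather than a strictly larger invariant subspace merely containing it. This amounts to matching $\mathrm{Im}\bk{N\bk{w_L,\lambda}}$ with a standard module whose unique irreducible quotient (under the inverse intertwining via the cocycle relation \Cref{Eq_cocycle_condition}) produces the Langlands quotient on the dual side; by duality this pins down the kernel as exactly $\tau$, and the global uniqueness of Langlands data attached to any irreducible admissible representation then closes the argument.
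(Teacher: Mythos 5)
The paper itself does not prove this statement: it cites it directly from \cite{MR2050093} and observes in a single sentence that the subrepresentation version follows from the more common quotient version ``by taking contragredients.'' Your proposal is essentially an attempt to flesh out that remark, which is the right approach and the one the paper has in mind. However, the central bookkeeping step is incorrect. For normalized parabolic induction of admissible representations the contragredient stays at the \emph{same} parabolic: $\bk{\Ind_Q^G\bk{\sigma\otimes\mu}}^\vee \cong \Ind_Q^G\bk{\sigma^\vee\otimes\bk{-\mu}}$ (see e.g.\ Casselman's notes, Theorem 3.1.2, or Bernstein--Zelevinsky). There is no opposite parabolic $\bar Q$ here, so the detour through $\bar Q$ followed by conjugation by $w_L$ starts from a false premise, and your landing point $\Ind_Q^G\bk{\sigma'\otimes\bk{-w_L\lambda}}$ is not the contragredient.

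Once the contragredient formula is corrected, the translation is actually simpler than what you wrote. If $\mu$ is Langlands-dominant (i.e.\ $\gen{\mu,\check\alpha}>0$ for $\alpha\in\Delta\setminus\Delta_L$), then $-\mu$ lies in $\mathfrak{a}_L^+$; $\sigma^\vee$ is again tempered; and the unique irreducible quotient of $\Ind_Q^G\bk{\sigma\otimes\mu}$ dualizes directly to the unique irreducible subrepresentation of $\Ind_Q^G\bk{\sigma^\vee\otimes\bk{-\mu}}$ with no intermediate conjugation. The element $w_L$ enters only through the Langlands operator $N\bk{w_L,\lambda}$ (equivalently through the adjoint $N\bk{w_L^{-1},\cdot}$ on the dual module, as the paper itself hints after \Cref{eq:contragredient_decomposition}), not through any repositioning of the inducing parabolic. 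Your second and third paragraphs --- adjointness of the normalized intertwining operators, and the matching of $\mathrm{Ker}\bk{N\bk{w_L,\lambda}}$ with the dual Langlands quotient --- are the right ingredients, but the sign/parabolic bookkeeping needs to be redone starting from the corrected contragredient identity.
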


The operator $N\bk{w_L,\lambda}$ is called the \textbf{Langlands operator} for the standard module $\Ind_Q^G \bk{\sigma\otimes\lambda}$.

We note the following useful corollary of \Cref{Thm:Langlands_Subrepresentation_theorem}.

\begin{Cor}
	\label{Cor:Induction_from_antidominant_weight_has_a_UIS}
	Let $\lambda\in \mathfrak{a}_{T,\R}^\ast$ be anti-dominant, in the sense that $\gen{\lambda,\check{\alpha}} \leq 0$ for all $\alpha\in\Delta$.
	Then $\Ind_B^G\lambda$ admits a unique irreducible subrepresentation.
\end{Cor}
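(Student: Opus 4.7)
The plan is to apply \Cref{Thm:Langlands_Subrepresentation_theorem} after an induction in stages that pushes the singular directions of $\lambda$ into a Levi subgroup. The subtlety is that \Cref{Thm:Langlands_Subrepresentation_theorem} requires the inducing parameter to lie in the open cone $\mathfrak{a}_L^+$, while our hypothesis only asserts that $\lambda$ is weakly anti-dominant; so we must isolate the walls $\langle\lambda,\check{\alpha}\rangle=0$ inside a standard Levi.

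Concretely, I would set $\Theta = \{\alpha \in \Delta : \langle \lambda, \check{\alpha}\rangle = 0\}$ and let $P = P_\Theta = MU$ be the corresponding standard parabolic, so that $\Theta$ is exactly the set of simple roots of $M$. By induction in stages,
\[
\Ind_B^G \lambda = \Ind_P^G \bk{\Ind_{B \cap M}^M \lambda}.
\]
Since $\langle \lambda, \check{\alpha}\rangle = 0$ for every simple root $\alpha$ of $M$, the restriction of $\lambda$ to $T \cap M^{\mathrm{der}}$ is trivial, so $\lambda$ extends to an unramified character $\chi_M$ of $M$ which is trivial on the derived subgroup. This identifies $\Ind_{B \cap M}^M \lambda \cong \tau \otimes \chi_M$, where $\tau = \Ind_{B \cap M}^M \tr_T$ is the spherical unramified principal series of $M$ at the origin.

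The next step is to verify that $\Ind_P^G \bk{\tau \otimes \chi_M}$ is a standard module in the sense of \Cref{Thm:Langlands_Subrepresentation_theorem}. The strict inequalities $\langle \lambda, \check{\alpha}\rangle < 0$ for $\alpha \in \Delta \setminus \Theta$ place $\chi_M$ in $\mathfrak{a}_M^+$; meanwhile $\tau$ is induced from the unitary (trivial) character of $T$, so it is tempered, and the standard reducibility criterion for unramified principal series shows that $\tau$ is irreducible at the origin. Applying \Cref{Thm:Langlands_Subrepresentation_theorem} to the standard module $\Ind_P^G \bk{\tau \otimes \chi_M}$ yields a unique irreducible subrepresentation, which is therefore also the unique irreducible subrepresentation of $\Ind_B^G \lambda$.

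The main technical obstacle is ensuring that $\tau$ is both irreducible and tempered in a uniform way for real and $p$-adic fields under the hypotheses of \Cref{Section:Preliminaries}. In the $p$-adic split case this is classical, since no pairing of $\tr_T$ with a coroot can equal $|\cdot|^{\pm 1}$; the analogous statement for split connected real reductive groups at the origin of $\mathfrak{a}_\C^\ast$ is equally standard. The edge case $\Theta = \Delta$ (equivalently $\lambda = 0$) should be noted separately, as there the outer induction is trivial and the conclusion reduces directly to the irreducibility of $\tau$ itself.
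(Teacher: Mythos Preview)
Your proposal is correct and follows essentially the same route as the paper: you isolate the set $\Theta=\{\alpha\in\Delta:\gen{\lambda,\check\alpha}=0\}$, pass by induction in stages to the corresponding Levi, recognize $\Ind_{B\cap M}^M\lambda$ as an irreducible tempered representation twisted by a character in the open negative chamber, and invoke \Cref{Thm:Langlands_Subrepresentation_theorem}. The only difference is in the justification of the irreducibility of $\tau=\Ind_{B\cap M}^M\tr_T$: the paper appeals to Harish-Chandra's commuting algebra theorem (showing $\mathrm{End}_M(\tau)\cong\C$ because every normalized intertwiner $N(w,0)$ is the identity), which handles both the real and $p$-adic cases uniformly, whereas you gesture at a ``standard reducibility criterion'' and then treat the two cases separately; the paper's argument is cleaner here and worth adopting.
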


In order to prove \Cref{Cor:Induction_from_antidominant_weight_has_a_UIS}, we need the following fact:

\begin{Lem}
	\label{Appendix:Induction_from_trivial_is_Irreducible}
	The representation $\pi=Ind_B^G \Id_T$ is irreducible.
\end{Lem}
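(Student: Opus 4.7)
The approach is to combine induction in stages with the Langlands subrepresentation theorem (\Cref{Thm:Langlands_Subrepresentation_theorem}) through a limiting argument. The reduction to $SL_2\bk{F}$ is provided by the diagram \Cref{Eq_induction_in_stages}: for each simple root $\alpha \in \Delta$, the operator $N\bk{w_\alpha, 0}$ on $\pi$ is controlled by $N\bk{w_\square, 0}$ on $\Ind_{\mathcal{B}}^{SL_2\bk{F}} \Id$. The latter $SL_2$-representation is irreducible at $s = 0$ (by the earlier $SL_2$-facts, reducibility occurring only at $s = \pm 1$), so Schur's lemma makes $N\bk{w_\square, 0}$ a scalar; the Gindikin--Karpelevich formula together with the normalization $N\bk{w_\square, 0} f_0 = f_0$ pins the scalar to $1$, giving $N\bk{w_\square, 0} = \id$ on $\Ind_{\mathcal{B}}^{SL_2\bk{F}} \Id$.

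The $SL_2$-triviality propagates to $G$: using the cocycle relation \Cref{Eq_cocycle_condition} together with the $G$-equivariance of $N\bk{w, 0}$ and the fact that the simple-root embeddings $\iota_\alpha$ generate $G$, one obtains $N\bk{w, 0} = \id$ on $\pi$ for every $w \in W$, in particular for the longest element $w_0$.

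To conclude irreducibility, one deforms $\lambda_\epsilon \to 0$ from within the strictly anti-dominant chamber. By \Cref{Thm:Langlands_Subrepresentation_theorem}, the unique irreducible subrepresentation of $\IBG \lambda_\epsilon$ is $\ker N\bk{w_0, \lambda_\epsilon}$. As $\epsilon \to 0$, these operators degenerate to $N\bk{w_0, 0} = \id$ with trivial kernel, so the unique irreducible subrepresentation of $\pi$ fills the entire space; that is, $\pi$ is irreducible.

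The main obstacle is the globalization step (passing from $N\bk{w_\square, 0} = \id$ on $\Ind_{\mathcal{B}}^{SL_2\bk{F}} \Id$ to $N\bk{w_\alpha, 0} = \id$ on the full $\pi$), since the pullback $\iota_\alpha^*$ in \Cref{Eq_induction_in_stages} need not be injective; the commutation relation alone places the image of $N\bk{w_\alpha, 0} - \id$ in $\ker \iota_\alpha^*$ but does not immediately force it to vanish. A cleaner alternative that avoids this difficulty is to invoke the classical irreducibility criterion for unramified principal series, namely that $\IBG \chi$ is irreducible if and only if $\chi \circ \check\alpha \neq \FNorm{\cdot}_F^{\pm 1}$ for every positive root $\alpha$; at $\chi = \Id_T$ this condition is immediate.
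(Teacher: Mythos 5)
Your proposal runs into two difficulties, one of which you already flagged, and the other of which is a genuine gap.

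On the globalization step (your flagged concern): it can in fact be repaired, but not via the pullback diagram \Cref{Eq_induction_in_stages} directly. The clean route is induction in stages applied to the operator itself: for each simple $\alpha$, write $\pi=\Ind_{P_\alpha}^G\bk{\Ind_{B\cap M_\alpha}^{M_\alpha}\Id}$, observe that $N\bk{w_\alpha,0}$ on $\pi$ is the image under the exact functor $\Ind_{P_\alpha}^G$ of the rank-one operator $N^{M_\alpha}\bk{w_\alpha,0}$ on $\Ind_{B\cap M_\alpha}^{M_\alpha}\Id$, and the latter is the identity by Schur's lemma (the $M_\alpha$-representation is irreducible at the center of the unitary axis) plus the normalization on the spherical vector. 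Applying $\Ind_{P_\alpha}^G$ then gives $N\bk{w_\alpha,0}=\id$ on $\pi$, and the cocycle relation \Cref{Eq_cocycle_condition} propagates this to all of $W$ since $w\cdot 0=0$.

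The serious gap is in the concluding limiting argument. \Cref{Thm:Langlands_Subrepresentation_theorem} applies only when $\lambda$ lies in the \emph{open} chamber $\mathfrak{a}_L^+$, so it says nothing directly about $\lambda=0$; moreover the kernel of a holomorphic family of intertwining operators is not continuous, so the fact that $\ker N\bk{w_0,\lambda_\epsilon}$ is the unique irreducible subrepresentation of $\IBG\lambda_\epsilon$ for $\epsilon\neq 0$ while $\ker N\bk{w_0,0}=\set{0}$ does not imply that $\pi$ has a unique irreducible subrepresentation equal to all of $\pi$ --- the inference "trivial kernel, therefore the subrepresentation fills the space" is not a valid deduction, since at $\lambda=0$ the kernel is simply no longer the object Langlands identifies. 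Worse, to even assert a unique irreducible subrepresentation for anti-dominant $\lambda$ with non-trivial stabilizer is precisely \Cref{Cor:Induction_from_antidominant_weight_has_a_UIS}, which in the paper is proved \emph{from} this Lemma, so that route is circular. Your fallback --- invoking the Kato/Keys criterion that unramified $\IBG\chi$ is irreducible iff $\chi\circ\check{\alpha}\neq\FNorm{\cdot}^{\pm1}$ for all roots --- is a valid way out, but it imports an external theorem that is itself comparable in depth.

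The paper's own proof takes a different and shorter route. After showing $N\bk{w,0}=\id$ for all $w\in\Stab_W\bk{\Id_T}=W$ (the step you reduce to $SL_2$), it invokes Harish-Chandra's commuting algebra theorem to conclude $\operatorname{End}_G\bk{\pi}\cong\C$, and then uses that $\pi$ is \emph{unitary} of finite length, hence semisimple, so the number of irreducible summands is bounded by $\dim\operatorname{End}_G\bk{\pi}=1$. No Langlands theorem and no limiting process is needed: unitarity replaces both. Incorporating the unitarity/commuting-algebra argument in place of your limiting step would make your proposal correct and essentially match the paper.
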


\begin{proof}
	We follow the ideas of \cite{MR620252,MR644669,MR582703,MR0460543}.
	Harish-Chandra's commuting algebra theorem states that the algebra $End_G\bk{\pi}$ is generated by $N\bk{w,\Id_M,0}$ where $w\in\Stab_W\bk{\Id_T}=W$.
	However, a simple calculation shows that $N\bk{w,\Id_M,0}=Id$ for any $w\in\Stab_W\bk{\Id_T}$ and hence $End_G\bk{\pi}\cong \C$.
	
	On the other hand, $\pi$ is unitary of finite length and hence isomorphic to a direct sum of irreducible representation $\oplus_{i=1}^l \sigma_i$.
	It follows that $\dim\bk{End_G\bk{\pi}}\geq l$.
	Hence $l=1$ and $\pi$ is irreducible.
\end{proof}

\begin{proof}[Proof of \Cref{Cor:Induction_from_antidominant_weight_has_a_UIS}]
	Let
	\[
	\Delta_L = \set{\alpha\in\Delta \mvert \gen{\lambda,\check{\alpha}}=0},
	\]
	$P=P_{\Delta_L}$ and let $L=M_{\Delta_L}$ be the (maximal) standard Levi subgroup such that the restriction of $\lambda$ to $L^{der}$ is trivial.
	
	By \Cref{Appendix:Induction_from_trivial_is_Irreducible}, $\Ind_{B\cap L}^L \lambda$ is an irreducible representation and can, in fact, be written as $\sigma\otimes\lambda'$, where $\sigma$ is a tempered representation of $L$ and $\lambda\in \mathfrak{a}_L^+$.
	\Cref{Cor:Induction_from_antidominant_weight_has_a_UIS} then follows from \Cref{Thm:Langlands_Subrepresentation_theorem}.
\end{proof}

%Instead, use \cite[Theorem 2.1]{MR999488}

%\todonum{Check for which groups the theorem works. Only semi-simple groups or also reductive? Only simply-connected or adjoint and others two? Can the notations accomodate quasi-split groups? Can we use less here? Perhaps we only need to make assumptions on Levi subgroups here?}

%\section{Singularities of Intertwining Operators and Decompositions of Principal Series Representations}

%{\color{red} I should now combine the two subsections into one theorem.
%Alternatively, the proof in the semi-simple rank 1 case should be rendered into an induction in stages to a Levi that contains only one "pole" root.
%
%
%
%Can we use this?
%Proposition. Any involution $w\in W$ can be obtained starting from the involution $e$ by a sequence of length-increasing operations that are either multiplication of an involution by a simple reflection $s_\alpha$ with which it commutes, or conjugation by a simple reflection $s_\alpha$ with which it does not commute.
%
%from \url{https://math.stackexchange.com/questions/357588/elements-of-order-2-in-a-weyl-group} to perofrm the s.s. rank$>1$?
%}
%\todonum{Do this}

%\subsection{Levi Subgroups of Semi-Simple Rank 1}
\section{Decomposition with Respect to Levi Subgroups of Semi-Simple Rank 1}
\label{Section:Levi_Subgroups_of_SS_rank_1}

In this section we prove our main result of this paper, \Cref{Thm_Main}.
Before stating and proving it, we start by setting up some notations and listing the assumptions of this theorem.
While this list of assumptions may seem incomprehensible at first glance, in \Cref{Sec:Examples} we prove the existence of points $\lambda_0\in\mathfrak{a}_{T,\C}^\ast$ such that $\Ind_B^G\lambda_0$ decompose as in \Cref{Thm_Main}.
In fact, we show that if $rank(G)>2$, then there are infinitely many such points $\lambda_0$.

%We make the following assumptions on our setting:
\begin{enumerate} [(\bfseries {Assumption} 1)]
\itshape

%\item We fix the enumeration on $\Delta$ and in particular the simple root $\alpha\in\Delta$ is fixed.
\item Fix a simple root $\alpha\in\Delta$.
% and $\lambda_0\in\mathfrak{a}_\C^\ast$ such that $\gen{\lambda_0,\check{\alpha}}=1$.
\label{Assumption:SimpleRootEnumeration}
\setcounter{qcounter}{\value{enumi}}
\end{enumerate}

We make the following notations:
\begin{itemize}
\item Let $H_1=\set{\lambda\in \mathfrak{a}_\C^\ast \mvert \gen{\lambda,\check{\alpha}}=1}$, this is a hyperplane in $\mathfrak{a}_{T,\C}^\ast$.
%passing through $\lambda_0$.
%\item $w_\alpha=w_{\alpha}$
%\item Let $W'$ be the the subgroup of $W$ generated by the simple reflections $w_{\alpha_i}$ for $i\neq 1$.
%\item Let $P=P_{\set{\alpha}}$ be the standard parabolic subgroup of $G$ generated by $B$ and $w_\alpha$.
%Write $M$ for the Levi subgroup of $P$.
\item Let $P=P_{\set{\alpha}}$ and $M=M_{\set{\alpha}}$.
%\item Let $\chi_0\in \mathfrak{a}_{M,\C}^\ast$ such that $\lambda_0$ is its Jacquet functor along $B$, $\mathcal{J}_B^M\bk{\chi_0}=\lambda_0$. This can be chosen to be the (1-dimensional) unique irreducible subrepresentation of $\Ind_{B\cap M}^M\bk{\lambda_0}$.
%In particular, one has
%\[
%\chi_0\res{T} = \lambda_0 - \rho_M = \lambda_0 - \frac{\alpha}{2} .
%\]
%
%Another way to think of $\chi_0$ is as follows.

\item
Let $A_M$ denote the central torus of $M$ and $M^{der}=\coset{M,M}$ be the derived group of $M$.
We have $A_M\subset T$ and hence $\mathfrak{a}_{M,\C}^\ast \hookrightarrow \mathfrak{a}_{T,\C}^\ast$.
In fact, the image of this embedding can be identified as those elements $\lambda\in\mathfrak{a}_{T,\C}^\ast$ satisfying $\gen{\lambda,\check{\alpha}}=0$.
Any character of $M$ is a trivial extension of a character of $A_M$.
Namely, of the form $\chi\boxtimes \Id_{M^{der}}$, where $\chi$ is a character of $A_M$, trivial on $A_M\cap M^{der}$.
% and $\Id_{M^{der}}$ is the trivial representation of $L$.
Under these notations, it holds that $$\chi_0=\bk{\lambda_0-\frac{\alpha}{2}}\res{A_M}\boxtimes\Id_L .$$
Alternatively, $\chi_0$ is a character of $M$ such that
\[
\chi_0\res{T} = \lambda_0 - \rho_M = \lambda_0 - \frac{\alpha}{2} .
\]
\end{itemize}

\begin{enumerate}[(\bfseries {Assumption} 1)]
\itshape
\setcounter{enumi}{\value{qcounter}}
%\item There exists, a fixed, $\lambda_0\in H_1$ such that $W'\cap\Stab_W\bk{w_\alpha\cdot\lambda_0}\neq\set{1}$.
\item Fix $\lambda_0\in H_1$ such that $\Stab_W\bk{w_\alpha\cdot\lambda_0}\neq\set{1}$.
We note that $w_\alpha\notin \Stab_W\bk{w_\alpha\cdot\lambda_0}$.
\label{Assumption:Stabilizer_is_not_trivial}

%\item We fix $1\neq w_0\in W'\cap \Stab_W\bk{w_\alpha\cdot\lambda_0}$ and assume that $N\bk{w',w_\alpha\cdot\lambda_0}=Id$.
\item Fix $1\neq w_0 \in \Stab_W\bk{w_\alpha\cdot\lambda_0}$ and assume that $N\bk{w_0,w_\alpha\cdot\lambda_0}=Id$.
\label{Assumption:w2ActsAsId}
%\todonum{Check what is the connection beteen this assumption and the last ones. If it doesn't follow from them, check that the solutions in section 4 satisfy this.}

%\todonum{Do we need to assume that $w'$ is an involution?}
\setcounter{qcounter}{\value{enumi}}
\end{enumerate}

%Note that this assumption holds in the following scenarios:
%\begin{itemize}
%\item Let $L$ be a minimal standard Levi such that $w_0\in W_L$ and assume that $\Ind_{B\cap L}^L w_\alpha\cdot\lambda_0$ is irreducible. Then $N\bk{w_0,w_\alpha\cdot\lambda_0}=Id$.
%\item More generally, with $L$ as above, let $\Delta_L$ denote the set of simple roots of $L$.
%Assume that $\Ind_{B\cap L}^L \bk{w_\alpha\cdot\lambda_0}$ admits a unique irreducible subrepresentation which is spherical.
%For example, if $\gen{w_\alpha\cdot\lambda,\check{\beta}}<0$ for all $\beta\in\Delta_L$, it follows from the Langlands' subrepresentation theorem.
%%\todonum{Define $\Delta_L$}
%%generated by the spherical section and that $N\bk{w',w_\alpha\cdot\lambda_0}$ is bijective. Then $N\bk{w',w_\alpha\cdot\lambda_0}=Id$.
%\end{itemize}

We denote
\begin{equation}
\begin{split}
H_{-1} & = \set{\lambda\in\mathfrak{a}_\R^\ast \mvert \gen{w_0w_\alpha\cdot\lambda,\check{\alpha}}=-1} \\
& = \set{\lambda\in\mathfrak{a}_\R^\ast \mvert \gen{w_\alpha w_0w_\alpha\cdot\lambda,\check{\alpha}}=1}.
\end{split}
\end{equation}
%\todonum{Check the order of elements here}
Note that $\lambda_0 \in H_1\cap H_{-1}$.

\begin{enumerate}[(\bfseries {Assumption} 1)]
\itshape
\setcounter{enumi}{\value{qcounter}}

\item Assume that $H_1\neq H_{-1}$.
Equivalently, assume that $w_0$ does not commute with $w_\alpha$ (see \Cref{App_Lem_1} and \Cref{App_Lem_3} in \Cref{Appendix:Facts_on_roots_systems_and_Weyl_groups}).
\label{Assumption:H1H-1NotEqual}

%https://mathoverflow.net/questions/54926/longest-element-of-weyl-groups
%https://math.stackexchange.com/questions/357588/elements-of-order-2-in-a-weyl-group
%https://mathoverflow.net/questions/201254/root-in-positive-weyl-chamber
%https://mathoverflow.net/questions/255300/weyl-group-elements-fixing-a-set-of-simple-roots?rq=1

\item Fix a line $\restline\subset \mathfrak{a}_\R^\ast$ such that $\restline\cap H_1 =\restline\cap H_{-1}=\set{\lambda_0}$ and that the angle between $\restline$ and $H_1$ is not supplementary to the angle between $\restline$ and $H_{-1}$.
\label{Assumption:LineS}

\setcounter{qcounter}{\value{enumi}}
\end{enumerate}

The existence of such a line $\restline$ follows from ({\bfseries {Assumption} \ref{Assumption:H1H-1NotEqual}}).
Namely, $H_1$ and $H_{-1}$ are distinct (affine) hyperplanes and hence of dimension $n-1$, and hence their intersection has (at most) dimension $n-2$.

\begin{enumerate}[(\bfseries {Assumption} 1)]
\itshape
\setcounter{enumi}{\value{qcounter}}

%\item Assume that $\chi_0$ is in Langlands position.
%Namely, assume that for any $i>1$ it holds that $\gen{\chi_0,\check{\alpha_i}}<0$.
\item Assume that $\Ind_{P}^G \tr_M\otimes \chi_0$ admits a unique irreducible subrepresentation.
\label{Assumption:UISTrivial}

\item Assume that $\Ind_{P}^G \St_M\otimes \chi_0$ admits a unique irreducible subrepresentation.
\label{Assumption:UISSteinberg}
%\todonum{Maybe explain that there is always such an element in $W\cdot\lambda_0$}
\setcounter{qcounter}{\value{enumi}}
\end{enumerate}

\begin{Thm}
\label{Thm_Main}
Assuming that the data $\bk{\lambda_0,\alpha,w_0}\in \mathfrak{a}_{T,\R}^\ast\times\Delta\times\Stab_W\bk{\lambda_0}$ satisfy assumptions \textbf{\ref{Assumption:SimpleRootEnumeration}-\ref{Assumption:UISTrivial}}.
Then
\begin{equation}
\label{Eq:Thm_Main}
\Ind_B^G {\lambda_0} = 
\bk{\Ind_{P}^G \St_M\otimes \chi_0} \oplus 
\bk{\Ind_{P}^G \tr_M\otimes \chi_0} .
\end{equation}

Furthermore, assuming \textnormal{({\bfseries {Assumption} \ref{Assumption:UISSteinberg}})}, each of $\Ind_{P}^G \St_M\otimes \chi_0$ and $\Ind_{P}^G \tr_M\otimes \chi_0$ admits a unique irreducible subrepresentation and the maximal semi-simple subrepresentation of $\Ind_B^G {\lambda_0}$ is of length $2$.
%Under assumptions 1-7
%\[
%\Ind_B^G {\lambda_0} = \Ind_P^G \St_M\otimes \chi_0 \oplus \Ind_P^G \tr_M\otimes \chi_0 .
%\]
%Furthermore, under assumption 6, each of $\Ind_P^G \St_M\otimes \chi_0$ and $\Ind_P^G \tr_M\otimes \chi_0$ admits a unique irreducible subrepresentation.

%\todonum{Make sure that assumption \theqcounter is not needed for the main claim}
\end{Thm}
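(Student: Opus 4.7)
The strategy is to construct a non-scalar self-intertwining endomorphism $E$ of $\Ind_B^G\lambda_0$ as a directional limit, and exhibit the claimed decomposition as its eigenspace decomposition. Set $w := w_\alpha w_0 w_\alpha$. Since $w_0$ stabilizes $w_\alpha\cdot\lambda_0$ (Assumption 2), a direct computation shows $w\cdot\lambda_0 = \lambda_0$, so wherever defined, $N(w,\lambda_0)$ is an endomorphism of $\Ind_B^G\lambda_0$. By the cocycle condition,
\[
N(w,\lambda) \;=\; N(w_\alpha,\, w_0 w_\alpha\cdot\lambda)\;\circ\; N(w_0,\, w_\alpha\cdot\lambda)\;\circ\; N(w_\alpha,\lambda),
\]
and by Assumption 3 the middle factor converges to $\Id$ as $\lambda\to\lambda_0$, so the analysis reduces to the two outer factors.

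Applying the induction-in-stages diagram, each outer factor descends to the $SL_2(F)$-intertwining operator $N(w_{\square},s)$, at parameters $s_1(\lambda) = \langle\lambda,\check\alpha\rangle$ (value $+1$ at $\lambda_0$) and $s_2(\lambda) = \langle w_0 w_\alpha\cdot\lambda,\check\alpha\rangle$ (value $-1$ at $\lambda_0$). By the Laurent expansions recalled in the preliminaries, the first factor has a simple pole at $\lambda_0$ whose residue-operator, lifted via $\iota_\alpha^\ast$, has image $\Ind_P^G\St_M\otimes\chi_0 \subset \Ind_B^G(w_\alpha\cdot\lambda_0)$ and has $\Ind_P^G\tr_M\otimes\chi_0$ in its kernel. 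The third factor is regular at $\lambda_0$, and (crucially) the kernel of its leading value is precisely that image subspace. Thus the residue of the first factor is annihilated by the leading coefficient of the third: a pole meets a ``zero'' in a compatible way.

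Parametrize $\mathcal{S}$ as $\lambda(\tau) = \lambda_0 + \tau v$ and put $a = \langle v,\check\alpha\rangle$, $b = \langle v,\, w_\alpha w_0\check\alpha\rangle$. Assumption 5 (non-tangential, non-supplementary) ensures $a,b$ are nonzero with well-defined finite ratio $b/a$. Multiplying out the two Laurent series and using the cancellation above, the $\tau^{-1}$-term of $N(w,\lambda(\tau))$ vanishes, and the constant term defines
\[
E \;=\; \lim_{\substack{\lambda\to\lambda_0\\\lambda\in\mathcal{S}}} N(w,\lambda) \;=\; E_0 \;+\; \tfrac{b}{a}\, E_1,
\]
where $E_0$ is the ``naive'' constant term (essentially $\mathcal{A}_0\mathcal{C}_0$ after induction in stages) and $E_1$ (essentially $\mathcal{A}_1\mathcal{C}_{-1}$) is the first-order correction produced by the cancellation. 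As a limit of intertwining operators, $E$ is a self-intertwining endomorphism of $\Ind_B^G\lambda_0$ depending on the direction $v$.

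To finish, I diagonalize $E$. The operator $E_0$ respects the natural two-step filtration of $\Ind_B^G\lambda_0$ with graded pieces $\Ind_P^G\St_M\otimes\chi_0$ (the submodule) and $\Ind_P^G\tr_M\otimes\chi_0$ (the quotient), acting by scalars on each piece by the standard description of intertwining operators at regular points. The correction $\frac{b}{a}E_1$ connects these two graded pieces nontrivially, so $E$ has two distinct eigenvalues $\mu_1\neq\mu_2$, and its eigenspace decomposition gives $\Ind_B^G\lambda_0 = V_{\mu_1}\oplus V_{\mu_2}$ with $V_{\mu_1} = \Ind_P^G\St_M\otimes\chi_0$ and $V_{\mu_2}\cong\Ind_P^G\tr_M\otimes\chi_0$; Assumption 6 (uniqueness of the irreducible subrepresentation of $\Ind_P^G\tr_M\otimes\chi_0$) is what pins down the identification of $V_{\mu_2}$ with that induced representation inside $\Ind_B^G\lambda_0$. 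The ``furthermore'' statement then follows formally by invoking Assumptions 6 and 7 on each summand.

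The main obstacle is the distinctness $\mu_1\neq\mu_2$. This boils down to showing that the ``off-diagonal'' piece $\frac{b}{a}E_1$ is not proportional to $E_0$, or equivalently, that the first Laurent coefficient $\mathcal{A}_1$ is nontrivial on $\mathrm{Im}(\mathcal{C}_{-1})$ in the $SL_2$ calculation. The non-supplementary angle hypothesis of Assumption 5 is exactly what keeps $b/a$ away from the degenerate value at which the two eigenvalues would collide; verifying the nondegeneracy of the $SL_2$-level ``derivative'' $\mathcal{A}_1\mathcal{C}_{-1}$ is the technical heart of the argument and requires a direct computation with the explicit $SL_2$ intertwining operator, then pulling back via $\iota_\alpha^\ast$.
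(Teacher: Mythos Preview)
Your overall strategy coincides with the paper's: build a self-intertwining $E\in\End_G(\Ind_B^G\lambda_0)$ as the directional limit of $N(w_\alpha w_0 w_\alpha,\lambda)$ along $\mathcal S$, using the pole--zero cancellation between the two outer $N(w_\alpha,\cdot)$ factors, and then split $\Ind_B^G\lambda_0$ as an eigenspace decomposition of $E$. Where your plan diverges from the paper, it runs into a genuine gap.

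First, two formula-level issues. In the factorization $N(w_\alpha,w_0w_\alpha\lambda)\circ N(w_0,w_\alpha\lambda)\circ N(w_\alpha,\lambda)$ the rightmost factor is applied first, so the constant term of the limit is $C_0A_0$ and the correction is $\kappa_1\,C_{-1}A_1$ (your $\mathcal A_0\mathcal C_0$ and $\mathcal A_1\mathcal C_{-1}$ are in the wrong order and do not even land in $\End(\Ind_B^G\lambda_0)$). Also, since $N(w_0,w_\alpha\lambda)=\Id+\sum \ell_i B_{\hat e_i}+\cdots$ is only equal to $\Id$ at $\lambda_0$, the limit $E$ contains an extra term $C_{-1}BA_0$ with $B=\sum_i\kappa_i B_{\hat e_i}$; dropping it gives a non-equivariant operator.

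The substantive gap is your diagonalization step. You assert that the ``naive constant term'' acts by scalars on the two graded pieces, but $C_0$ (a subleading Laurent coefficient) is not $G$-equivariant, so $C_0A_0$ need not act by scalars on anything, and your off-diagonal heuristic for $E_1$ does not by itself rule out a Jordan block. The paper bypasses this entirely by extracting, from the cocycle relation $N(w_\alpha,w_\alpha\mu)\circ N(w_\alpha,\mu)=\Id$, the identities
\[
C_{-1}A_0=0=A_0C_{-1},\qquad C_0A_0-C_{-1}A_1=\Id=A_0C_0-A_1C_{-1},
\]
and then computing $E^2$ directly to obtain $(E-1)(E+\kappa_1)=0$. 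Thus $P=\frac{1}{1+\kappa_1}(\Id-E)$ is an honest projection, with $\kappa_1\neq -1$ exactly the non-supplementary angle condition. No explicit $SL_2$ computation of $\mathcal A_1$ is needed; what you call the ``technical heart'' is replaced by a two-line algebraic identity.

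Finally, identifying the eigenspaces with $\Ind_P^G(\St_M\otimes\chi_0)$ and $\Ind_P^G(\tr_M\otimes\chi_0)$ takes more than you indicate. One sees immediately from the formula for $E$ that $E=-\kappa_1$ on $\Ker A_0$, so $\Ker A_0\subseteq\Im P$; but equality is not automatic. The paper rules out $\Ker A_0\subsetneq\Im P$ by noting that the spherical vector satisfies $Ev^0=v^0$, hence $v^0\in\Ker P$ and $A_0\circ(\Id-P)\neq 0$; if also $A_0\circ P\neq 0$, then $\Im A_0=\Ind_P^G(\tr_M\otimes\chi_0)$ would acquire two irreducible subrepresentations, contradicting Assumption~6. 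This is where Assumption~6 actually enters.
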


\begin{proof}
In order to prove the theorem, we compute the limit of $N\bk{w_\alpha w_0w_\alpha,\lambda}$ at $\lambda_0$ \textbf{along} the line $\restline$ and show that the direct summands in \Cref{Eq:Thm_Main} are both eigenspaces of that operator.

Let $\ell_1,...,\ell_n:\mathfrak{a}_{T,\R}^\ast\to\C$  denote a set of affine functions such that:
\begin{enumerate}
\item $\ell_i\bk{\lambda_0}=0$ for all $1\leq i\leq n$. In particular, $\ell\bk{\lambda-\lambda_0}$ is a linear functional on $\mathfrak{a}_{T,\R}^\ast$.
\item $\set{\nabla \ell_i\mvert 1\leq i\leq n}$ forms an orthogonal system in $\mathfrak{a}_{T,\R}^\ast$.
\item $\ell_1\bk{\lambda}=\gen{\lambda,\check{\alpha}}-1$.
\item $\ell_2\bk{\lambda}=\gen{w_0w_\alpha\cdot\lambda,\check{\alpha}}+1$.
\end{enumerate}
This can be done due to ({\bfseries {Assumption} \ref{Assumption:H1H-1NotEqual}}).

Note that any meromorphic function $\varphi$ in the neighbourhood of $\lambda_0$ has a Laurent expansion of the form
\[
\varphi\bk{\lambda} = \suml_{\vec{k}\in\Z^n} \bk{\prodl_{i=1}^n \ell_i\bk{\lambda}^{k_i}} \varphi_{\vec{k}} 
\]
with $\varphi_{\vec{k}}$ in the range of $\varphi$ (in what follows, we consider operator-valued meromorphic functions).

We start by writing the Laurent expansions of some normalized standard intertwining operators in the neighborhood of $\lambda_0$:
\begin{align*}
& N\bk{w_\alpha,\lambda}= \suml_{i=0}^\infty \bk{\gen{\lambda,\check{\alpha}}-1}^i A_i = \suml_{i=0}^\infty \ell_1\bk{\lambda}_i A_i, \\
& N\bk{w_0,w_\alpha\cdot\lambda} = Id + \suml_{\vec{k}\in\N^n} \bk{\prodl_{i=1}^n \ell_i\bk{\lambda}^{k_i}} B_{\vec{k}}, \\
& N\bk{w_\alpha,\bk{w_0w_\alpha}\cdot \lambda} = \suml_{i=-1}^\infty \bk{\gen{\lambda,\bk{w_0w_\alpha}^{-1}\cdot\check{\alpha}}+1}^i C_i = \suml_{i=-1}^\infty \ell_2\bk{\lambda}^i C_i .
\end{align*}
Here
\begin{align*}
& A_i\in \Hom_\C\bk{\Ind_B^G{\lambda_0},\Ind_B^G\bk{w_\alpha\cdot\lambda_0}}, \\
& B_{\vec{k}} \in \operatorname{End}_\C\bk{\Ind_B^G\bk{w_\alpha\cdot{\lambda_0}}}, \\
& C_i\in \Hom_\C\bk{\Ind_B^G{w_\alpha\cdot\lambda_0},\Ind_B^G\bk{\lambda_0}} .
\end{align*}
Note that $A_0$, $B_{\vec{0}}$ and $C_{-1}$ are $G$-equivariant but the rest of the operators $A_i$, $B_{\vec{k}}$ and $C_i$ need not be $G$-equivariant.
We further note that, by ({\bfseries {Assumption} \ref{Assumption:w2ActsAsId}}), $B_{\vec{0}}=Id$.
On the other hand, by \Cref{Eq_induction_in_stages} and \Cref{Eq_coefficients_Laurent_series_intertwining_operators}:
\[
\begin{split}
& Im\bk{A_0} = \Ind_{P}^G\bk{\tr_M\otimes \chi_0},\quad 
Ker\bk{A_0} =  \Ind_{P}^G\bk{\St_M\otimes\chi_0} \\
& Im\bk{C_{-1}} = \Ind_{P}^G\bk{\St_M\otimes\chi_0}, \quad
Ker\bk{C_{-1}} =  \Ind_{P}^G\bk{\tr_M\otimes \chi_0} .
\end{split}
\]

It follows, from \Cref{Eq_cocycle_condition}, that
\[
\begin{split}
& N\bk{w_\alpha,\bk{w_\alpha w_0w_\alpha}\cdot\lambda}\circ N\bk{w_\alpha,\bk{w_\alpha w_0}\cdot\lambda} = Id \\
& N\bk{w_\alpha, \bk{w_\alpha w_0}\cdot\lambda}\circ N\bk{w_\alpha, \bk{w_\alpha w_0w_\alpha}\cdot\lambda} = Id
\end{split}
\]
for any $\lambda\in \mathfrak{a}^\ast_\C$.
By evaluating the leading terms of both the left-hand side and right-hand side of these equations, we conclude that
\begin{equation}
\label{Eq:IdentitesOfCoefficients}
\begin{split}
& C_{-1}A_0 = 0 = A_0C_{-1} \\
& C_0A_0-C_{-1}A_1 = Id = A_0C_0-A_1C_{-1} .
\end{split}
\end{equation}
%\todonum{Add explanation}

Note that
\begin{align*}
& N\bk{w_\alpha w_0w_\alpha,\lambda}
 = N\bk{w_\alpha,w_0w_\alpha\cdot\lambda} \circ N\bk{w_0,w_\alpha\cdot\lambda}\circ N\bk{w_\alpha,\lambda} \\
& = 
\coset{\suml_{i=-1}^\infty \ell_2\bk{\lambda}^i C_i} \circ
\coset{Id + \suml_{\vec{k}\in\N^n} \bk{\prodl_{i=1}^n \ell_i\bk{\lambda}^{k_i}} B_{\vec{k}}} \circ
\coset{\suml_{i=0}^\infty \ell_1\bk{\lambda}_i A_i} \\
& = \frac{1}{\ell_2\bk{\lambda}} C_{-1}A_0 + \frac{\ell_1\bk{\lambda}}{\ell_2\bk{\lambda}}C_{-1}A_1 + C_{-1}\bk{\frac{\suml_{i=1}^n \ell_i\bk{\lambda}}{\ell_2\bk{\lambda}}B_{\hat{e_i}}} A_0 + C_0A_0 + \suml_{\stackrel{\vec{k}\in\N^n}{\FNorm{\vec{k}}\geq 1}} \bk{\prodl_{i=1}^n \ell_i\bk{\lambda}^{k_i}} N_{\vec{k}}
\end{align*}
is a
Laurent series for $N\bk{w_\alpha w_0w_\alpha,\lambda}$ in a neighbourhood of $\lambda_0$, where:
\begin{itemize}
\item $\hat{e_i}=\bk{\delta_{i,j}}_{j=1,...,n}$ are the standard basis vectors in $\R^n$.
\item For $\vec{k}\in\Z^n$, we write $\FNorm{\vec{k}}=\suml_{i=1}^n \FNorm{k_i}$.
\item $N_{\vec{k}}$ is the corresponding $\vec{k}$-coefficient in the Laurent series of $N\bk{w_\alpha w_0w_\alpha,\lambda}$; when $\FNorm{\vec{k}}\geq 1$, these coefficients will not play a role in the following computations.
\end{itemize}

Restricting $N\bk{w_\alpha w_0w_\alpha,\lambda}$ (in the $\lambda$ variable) to $\restline$ yields:
\begin{align*}
N\bk{w_\alpha w_0w_\alpha,\lambda}\res{\restline}
& = 
%\frac{1}{\ell_2\bk{\lambda}}C_{-1}A_0 
+ \frac{\ell_1\bk{\lambda}}{\ell_2\bk{\lambda}}C_{-1}A_1
+ C_{-1} \bk{\suml_{i=1}^n \frac{\ell_i\bk{\lambda}}{\ell_2\bk{\lambda}} B_{\hat{e_i}} } A_0
+ C_0A_0 \\
& = \frac{\ell_1\bk{\lambda}}{\ell_2\bk{\lambda}}C_{-1}A_1
+ C_{-1} \bk{\suml_{i=1}^n \frac{\ell_i\bk{\lambda}}{\ell_2\bk{\lambda}} B_{\hat{e_i}} } A_0
+ C_0A_0.
\end{align*}

For a vector $v\neq 0$, parallel to $\restline$, we define
\begin{equation}
\kappa_i = \lim\limits_{\lambda\to\lambda_0} \coset{\frac{\ell_i\bk{\lambda}}{\ell_2\bk{\lambda}}\res{\restline}} =
\frac{\gen{\nabla \ell_i,v}}{\gen{\nabla \ell_2,v}} .
\end{equation}
The fact that this limits indeed exist, i.e. $\gen{\nabla \ell_2,v}\neq 0$, is due to ({\bfseries {Assumption} \ref{Assumption:LineS}}).
Note that $\kappa_i$ is independent of the choice of $v$.
Taking the limit of $N\bk{w_\alpha w_0w_\alpha,\lambda}$ at $\lambda_0$ along $\restline$ yields
\begin{equation}
\begin{split}
E
& = \lim\limits_{\lambda\to\lambda_0} \coset{N\bk{w_\alpha w_0w_\alpha,\lambda}\res{\restline}} \\
& = \kappa_1C_{-1}A_1 + C_{-1} B A_0 + C_0A_0 = -\kappa_1 Id + \bk{\kappa_1+1}C_0A_0 + C_{-1}BA_0 ,
\end{split}
\end{equation}
where
\[
B=\suml_{i=1}^n \kappa_i B_{\hat{e_i}} .
\]

We note that $E\in \operatorname{End}_G\bk{\Ind_B^G\lambda_0}$.
Define
\begin{equation}
\label{Eq_definition_of _projection}
P = \frac{1}{1+\kappa_1}\bk{Id-E} \in \operatorname{End}_G\bk{\Ind_B^G\lambda_0}.
\end{equation}
This is well defined, i.e. $\kappa_1\neq -1$, due to ({\bfseries {Assumption} \ref{Assumption:LineS}}).

\underline{Claim:} $P$ is a projection.

Indeed, applying \Cref{Eq:IdentitesOfCoefficients},
\begin{align*}
E^2 
= & \kappa_1^2 Id - 2\kappa_1\bk{\kappa_1+1}C_0A_0 - 2\kappa_1C_{-1}BA_0 + \bk{\kappa_1+1}^2C_0A_0C_0A_0 \\
& + \bk{\kappa_1+1}C_0A_0C_{-1}BA_0 + \bk{\kappa_1+1}C_{-1}BA_0C_0A_0 + C_{-1}BA_0C_{-1}BA_0 \\
= & \kappa_1^2 Id - 2\kappa_1\bk{\kappa_1+1}C_0A_0 - 2\kappa_1C_{-1}BA_0 \\
& + \bk{\kappa_1+1}^2C_0A_0\bk{Id+C_{-1}A_1} + \bk{\kappa_1+1}C_{-1}BA_0\bk{Id+C_{-1}A_1} \\
= & \kappa_1^2 Id + \bk{1-\kappa_1^2} C_0A_0 + \bk{1-\kappa_1}C_{-1}BA_0  = \kappa_1 Id + \bk{1-\kappa_1}E
\end{align*}
and hence
\[
P^2 = \frac{1}{\bk{1+\kappa_1}^2} \bk{Id-E^2} = \frac{1}{\bk{1+\kappa_1}^2} \bk{Id-2E+\kappa_1 Id+\bk{1-\kappa_1}E} = \frac{1}{1+\kappa_1} \bk{Id-E} = P .
\]

Since $P$ is a $G$-equivariant and a projection, it follows that
\begin{equation}
\Ind_B^G\lambda_0 = \Image P \oplus \Ker P .
\end{equation}

It remains to prove that $Ker A_0 = Im P$ and $Im A_0 = Ker P$.

Since 
\[
P = Id - \bk{C_0 - \frac{1}{\kappa_1+1}C_{-1}B } A_0
\]
it follows that $Ker A_0\subseteq Im P$.
Assume the $Ker A_0\subsetneq Im P$.
%The intertwining map $N\bk{w_\alpha,\lambda_0}$ is a projective on $Im A_0$.
%It holds that
%\[
%N\bk{w_\alpha,\lambda_0} = N\bk{w_\alpha,\lambda_0}\circ P + N\bk{w_\alpha,\lambda_0}\circ\bk{Id-P} .
%\]
%By our assumption $N\bk{w_\alpha,\lambda_0}\circ P\neq 0$
Note that $Id-P$ is a projection on $Ker P$.
It holds that
\begin{equation}
\label{Eq:DecompositionofA0}
A_0 = A_0\circ P + A_0\circ\bk{Id-P} .
\end{equation}
By our assumption $A_0\circ P\neq 0$.
We note that, since $Ev^0=v^0$, $v^0\in Ker P$ and hence $A_0\circ\bk{Id-P}\neq 0$.
It follows that $Im A_0$ has at least two irreducible subrepresentations in contradiction with the fact that, by ({\bfseries {Assumption} \ref{Assumption:UISTrivial}}), it has a unique irreducible subrepresentation.
We conclude that $Ker A_0 = Im P$ and, from \Cref{Eq:DecompositionofA0}, it follows that $Im A_0 = Ker P$.

%Under ({\bfseries {Assumption} \ref{Assumption:UISTrivial}}), we apply the Langlands subrepresentation theorem.
%Indeed, $\St_M$ is tempered and $\chi_0$ is negative and hence $\Ind_Q^G\bk{\St_M\otimes\chi_0}$ admits a unique irreducible subrepresentation.

\end{proof}

\begin{Remark}
It follows from the proof that $\Ind_{P}^G\bk{\tr_M\otimes \chi_0}$ is the eigenspace of $E$ of eigenvalue $1$ and $\Ind_{P}^G\bk{\St_M\otimes\chi_0}$ is eigenspace of eigenvalue $-\kappa_1\neq 1$.
We note here that the decomposition in \Cref{Eq:Thm_Main} and the projection $P$ in \Cref{Eq_definition_of _projection} are independent of $\restline$ and only the eigenvalues of $E$ depend on $\restline$.
%$\Ind_{P}^G\bk{\St_M\otimes\chi_0}$ depends on $\restline$.
\end{Remark}

%\begin{Remark}
%For some applications, it is beneficial that the hyperplane $S$ is given by a simple root $\alpha_i\in\Delta$.
%Then, $\ind_B^G\bk{\lambda}$, for $\lambda\in S$, admits $I_Q\bk{s}$ \todonum{Define this space} as a subrepresentation, where $Q$ is the maximal parabolic subgroup associated to $\Delta\setminus\set{\alpha_i}$ and $s=\gen{\lambda,\check{\alpha}_i}$.
%Examples can be found in \cite[pg. 824-829]{Zampera1997}, \cite[Subsec. 5.5.3]{LaoResidualSpectrum} or \cite{SegalResiduesD4}.
%\todonum{Rewrite this remark, perhaps move to the section regarding applications to residual spectrum}
%
%%\todonum{Add reference to Hanzer's paper. Equation (23). Check if that decomposition follows from the theorem}
%\end{Remark}

Using induction in stages, \Cref{Eq_induction_in_stages},  ({\bfseries {Assumption} \ref{Assumption:UISTrivial}}) may be replaced with the following weaker assumption:
\begin{enumerate}[(\bfseries {Assumption} 1)]
\itshape
\item[(\bfseries {Assumption} \ref{Assumption:UISTrivial}')]
Let $L$ be a standard Levi containing $w_\alpha$ and $w'$ (and hence $M\subset L$) and assume that $\Ind_{P\cap L}^L \tr_M\otimes \chi_0$ admits a unique irreducible subrepresentation.
\end{enumerate}

\begin{Cor}
\label{Cor:SSRank1WithoutLanglandsPosition}
Under assumptions \textbf{\ref{Assumption:SimpleRootEnumeration}-\ref{Assumption:LineS}} and \textbf{\ref{Assumption:UISTrivial}'} \Cref{Eq:Thm_Main} holds.
\end{Cor}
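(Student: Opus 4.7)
The plan is to reduce to Theorem \ref{Thm_Main} applied inside the Levi subgroup $L$, then lift the resulting decomposition to $G$ via the exact functor $\Ind_{P_L}^G$, where $P_L = P_{\Delta_L}$ is the standard parabolic with Levi $L$. Since $L$ contains both $w_\alpha$ and $w_0$ (and hence $M$) by hypothesis, the element $w_\alpha w_0 w_\alpha$ lies in $W_L$, and by iterated application of the induction-in-stages diagram \eqref{Eq_induction_in_stages}, the three normalized intertwining operators $N(w_\alpha,\lambda)$, $N(w_0,w_\alpha\cdot\lambda)$, and $N(w_\alpha, w_0w_\alpha\cdot\lambda)$ that appear in the proof of Theorem \ref{Thm_Main} are precisely the images under $\Ind_{P_L}^G$ of the corresponding intertwining operators acting on $\Ind_{B\cap L}^L\lambda$.

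First I would verify that the data $(\lambda_0,\alpha,w_0,\restline)$ satisfies Assumptions \ref{Assumption:SimpleRootEnumeration}--\ref{Assumption:LineS} with $G$ replaced by $L$: the simple root $\alpha$ lies in $\Delta_L$; $w_0 \in \Stab_{W_L}(w_\alpha\cdot\lambda_0)\setminus\{1\}$; the equality $N^L(w_0,w_\alpha\cdot\lambda_0) = Id$ is inherited from Assumption \ref{Assumption:w2ActsAsId} via induction in stages; the hyperplanes $H_1$ and $H_{-1}$ are defined by the same linear conditions on $\mathfrak{a}_{T,\C}^\ast$; and the line $\restline$ retains its transversality properties. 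Combined with Assumption \ref{Assumption:UISTrivial}', all hypotheses of Theorem \ref{Thm_Main} are satisfied for the reductive group $L$.

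Next I would run the construction in the proof of Theorem \ref{Thm_Main} verbatim inside $L$, producing a projection $P^L \in \operatorname{End}_L(\Ind_{B\cap L}^L\lambda_0)$ with
\[
\Image P^L = \Ind_{P\cap L}^L \St_M\otimes\chi_0, \qquad \Ker P^L = \Ind_{P\cap L}^L \tr_M\otimes\chi_0.
\]
The crucial point is that the uniqueness-of-irreducible-subrepresentation step at the end of that proof is carried out entirely inside $L$, so Assumption \ref{Assumption:UISTrivial}' supplies exactly the UIS of $\Ind_{P\cap L}^L\tr_M\otimes\chi_0$ needed to conclude $\Ker A_0^L = \Image P^L$. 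Applying the exact functor $\Ind_{P_L}^G$ to the resulting direct sum decomposition of $\Ind_{B\cap L}^L\lambda_0$ and invoking induction in stages $\Ind_{P_L}^G\circ\Ind_{P\cap L}^L = \Ind_P^G$ then yields the decomposition \eqref{Eq:Thm_Main} for $G$.

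The main obstacle is a small point of bookkeeping: the Levi $L$ is reductive but not necessarily semi-simple, whereas Section \ref{Section:Preliminaries} states its standing hypotheses in the semi-simple Chevalley setting. This is essentially cosmetic, since the analytic properties of normalized intertwining operators used in the proof of Theorem \ref{Thm_Main} carry over unchanged to any Levi of a quasi-split Chevalley group; alternatively, one may pass to $L^{der}$ and account for the central torus by a twist, as the entire construction respects such twists by a central character. Once this point is dispensed with, exactness of parabolic induction propagates the $L$-level decomposition to $G$ automatically.
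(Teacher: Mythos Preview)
Your proposal is correct and follows essentially the same route as the paper: apply \Cref{Thm_Main} inside the Levi $L$ (the paper simply asserts ``the conditions of \Cref{Thm_Main} apply to $\Ind_{B\cap L}^L\lambda_0$''), then push the resulting direct-sum decomposition forward via $\Ind_Q^G$ and induction in stages. Your verification that Assumptions \ref{Assumption:SimpleRootEnumeration}--\ref{Assumption:LineS} descend to $L$ and your remark on the reductive-versus-semisimple bookkeeping are more careful than the paper's own two-line argument, but the strategy is identical.
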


\begin{proof}
Indeed, the conditions of \Cref{Thm_Main} applies to $\Ind_{B\cap L}^L\lambda_0$ and hence
\[
\Ind_{B\cap L}^L\lambda_0 = \Ind_{P\cap L}
\coset{\Ind_{P\cap L}^L \bk{\St_M\otimes \chi_0} \oplus \Ind_{P\cap L}^L \bk{\tr_M\otimes \chi_0}} .
\]
Applying induction by stages yield
\begin{align*}
\Ind_B^G {\lambda_0} 
& = \Ind_Q^G \bk{\Ind_{B\cap L}^L\lambda_0} \\
& = \Ind_Q^G\bk{\Ind_{{P}\cap L}^L \St_M\otimes \chi_0} \oplus \Ind_Q^G\bk{\Ind_{{P}\cap L}^L \tr_M\otimes \chi_0} \\
& = \Ind_{P}^G \bk{\St_M\otimes \chi_0} \oplus \Ind_{P}^G \bk{\tr_M\otimes \chi_0} ,
\end{align*}
where $Q$ is the standard parabolic subgroup whose Levi subgroup is $L$.
\end{proof}

%\begin{Remark}
%In the theorem, one may consider the case where $w_\alpha$ is a reflection which is not simple.
%Since it is conjugate to a simple reflection the theorem can be reconstructed for a non-standard parabolic.
%%\todonum{Finish this remark}
%%https://math.stackexchange.com/questions/1084790/weyl-groups-correspondence-of-reflections-and-roots
%\end{Remark}

%\subsection{Langlands Operators}
%\todonum{Can we find a "Langlands operator", namely $w\in W$ such that $Im\coset{N\bk{w,w^{-1}\cdot\lambda_0}}=\pi_1\oplus\pi_{-\kappa_1}$? See Zampera Lemma 3.2 pg. 828}

\section{Existence of $\lambda_0$}
\label{Sec:Examples}
One question which arises from the discussion in \Cref{Section:Levi_Subgroups_of_SS_rank_1} is whether there exist points $\lambda_0$ which satisfy the assumptions of \Cref{Thm_Main}.
In this section, we show that for any simple group $G$ (satisfying the assumptions in \Cref{Section:Preliminaries}) and any simple root of $G$, one can choose $\lambda_0$ as in \Cref{Thm_Main}.
%In fact, we show that if $rank(G)>2$, then there are infinitely many such points $\lambda_0$.
%However, the set of such points will have measure $0$.
We prove:
\begin{Thm}
	\label{Thm:Key_Example}
	Fix $\alpha\in\Delta$, $\lambda'\in \mathfrak{a}_\R^\ast$ and $S\subset \Delta\setminus\set{\alpha}$ satisfying:
	\begin{enumerate}
		\item There exists $\beta\in S$ such that $\gen{\beta,\check{\alpha}}\neq 0$ (i.e. $\alpha$ and $\beta$ are neighbours in the Dynkin diagram of $G$).
		\item $\gen{\lambda',\check{\alpha}}=-1$.
		\item $\gen{\lambda',\check{\beta}}=0\quad \forall \beta\in S$.
		\item $\gen{\lambda',\check{\beta}} < 0\quad \forall \beta\notin S\cup\set{\alpha}$.
	\end{enumerate}
	Then, for $\lambda=w_\alpha\cdot\lambda'$ amd $M=M_{\set{\alpha}}$, it holds that
	\[
	\Ind_B^G {\lambda_0} = 
	\bk{\Ind_{P}^G \St_M\otimes \chi_0} \oplus 
	\bk{\Ind_{P}^G \tr_M\otimes \chi_0} .
	\]
	Furthermore, both $\Ind_{P}^G \tr_M\otimes \chi_0$ admits a unique irreducible subrepresentation and if $\St_M$ is irreducible, then so does $\Ind_{P}^G \St_M\otimes \chi_0$.
\end{Thm}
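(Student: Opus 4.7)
The plan is to apply \Cref{Cor:SSRank1WithoutLanglandsPosition} to the triple $(\lambda_0,\alpha,w_0) = (w_\alpha\cdot\lambda',\alpha,w_\beta)$, with $\beta\in S$ chosen by hypothesis (1) so that $\gen{\alpha,\check{\beta}}\neq 0$, and with the auxiliary Levi $L$ in (\textbf{Assumption \ref{Assumption:UISTrivial}'}) taken to be $M_{\set{\alpha,\beta}}$. Since $w_\alpha\check{\alpha} = -\check{\alpha}$, we have $\gen{\lambda_0,\check{\alpha}}=1$, so $\lambda_0\in H_1$, and $w_\alpha\cdot\lambda_0 = \lambda'$. (\textbf{Assumption \ref{Assumption:Stabilizer_is_not_trivial}}) is immediate: $\Stab_W(\lambda')\supseteq\gen{w_\gamma\mvert\gamma\in S}$ is nontrivial by (3) and (1). (\textbf{Assumption \ref{Assumption:H1H-1NotEqual}}) reduces via \Cref{App_Lem_1,App_Lem_3} to the non-commutativity of $w_\alpha$ and $w_\beta$, which is $\gen{\alpha,\check{\beta}}\neq 0$, while (\textbf{Assumption \ref{Assumption:LineS}}) holds by the dimension argument after its statement. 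For (\textbf{Assumption \ref{Assumption:w2ActsAsId}}), induction in stages \eqref{Eq_induction_in_stages} for $w_\beta$ pulls $N(w_\beta,\lambda')$ back to $N(w_{\square},0)$ on $\Ind_\mathcal{B}^{SL_2}\FNorm{\omega}^0$; the latter is irreducible by \Cref{Appendix:Induction_from_trivial_is_Irreducible} applied to $SL_2$, so $N(w_{\square},0)$ is a scalar fixing the spherical vector, hence equals $Id$; writing $\Ind_B^G\lambda' = \Ind_{P_\beta}^G\Ind_{B\cap M_\beta}^{M_\beta}\lambda'$ and taking parabolic induction yields $N(w_\beta,\lambda')=Id$.

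The crucial step is (\textbf{Assumption \ref{Assumption:UISTrivial}'}): the UIS of $\Ind_{P\cap L}^L(\tr_M\otimes\chi_0)$. The $SL_2$-embedding $\tr_{SL_2}\hookrightarrow\Ind_\mathcal{B}^{SL_2}\FNorm{\omega}^{-1}$ lifts via $\iota_\alpha$ to $\tr_M\otimes\chi_0\hookrightarrow\Ind_{B\cap M}^M\lambda'$, because $\gen{\lambda',\check{\alpha}}=-1$ (hypothesis (2)) and $\lambda' - \chi_0\res{T} = -\alpha/2 \in\R\alpha$ gives matching restrictions to $A_M$. Induction in stages yields $\Ind_{P\cap L}^L(\tr_M\otimes\chi_0)\hookrightarrow\Ind_{B\cap L}^L\lambda'$; on the rank-$2$ Levi $L$ the weight $\lambda'\res{T\cap L}$ is antidominant ($\gen{\lambda',\check{\alpha}}=-1$, $\gen{\lambda',\check{\beta}}=0$), so \Cref{Cor:Induction_from_antidominant_weight_has_a_UIS} applied to $L$ furnishes the UIS of $\Ind_{B\cap L}^L\lambda'$, which descends to the embedded subrepresentation. \Cref{Cor:SSRank1WithoutLanglandsPosition} now gives the decomposition \eqref{Eq:Thm_Main}. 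The UIS of $\Ind_P^G(\tr_M\otimes\chi_0)$ in the ``furthermore'' clause is obtained by promoting the embedding to $\Ind_P^G(\tr_M\otimes\chi_0)\hookrightarrow\Ind_B^G\lambda'$ and applying \Cref{Cor:Induction_from_antidominant_weight_has_a_UIS} on $G$, since $\lambda'$ is antidominant on all of $\Delta$ by hypotheses (2)--(4).

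For the UIS of $\Ind_P^G(\St_M\otimes\chi_0)$ when $\St_M$ is irreducible, the plan is to recognise it as a Langlands standard module on an enlarged Levi. Set $S_0:=\set{\gamma\in S\mvert\gen{\alpha,\check{\gamma}}=0}$ and $L^*:=M_{\set{\alpha}\cup S_0}$. A coroot computation shows that $\chi_0$ is trivial on $M\cap [L^*,L^*]$ (using $\gen{\chi_0,\check{\alpha}}=0$ and $\gen{\chi_0,\check{\gamma}}=0$ for $\gamma\in S_0$), hence extends to a character of $L^*$, and that this extension is strictly antidominant with respect to the simple roots outside $L^*$ (using (4) for $\gamma\notin S\cup\set{\alpha}$ and $\gen{\alpha,\check{\gamma}}<0$ for $\gamma\in S\setminus S_0$). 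Transitivity of induction then gives
$\Ind_P^G(\St_M\otimes\chi_0) = \Ind_{Q^*}^G(\tau\otimes\chi_0)$,
where $Q^*$ is the parabolic with Levi $L^*$ and $\tau:=\Ind_{P\cap L^*}^{L^*}\St_M$. Since $S_0\perp\alpha$, the Levi $L^*$ is (up to isogeny) the commuting product $M\cdot M_{S_0}$, so $\tau$ decomposes as the outer tensor of $\St_M$ with $\Ind_{B\cap M_{S_0}}^{M_{S_0}}\tr$; the latter is irreducible and tempered by \Cref{Appendix:Induction_from_trivial_is_Irreducible}, hence $\tau$ is irreducible and tempered, and \Cref{Thm:Langlands_Subrepresentation_theorem} supplies the UIS. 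The main obstacle is precisely this last step: $\chi_0$ sits on the walls corresponding to $S_0$, so \Cref{Thm:Langlands_Subrepresentation_theorem} does not apply naively with $M$ as the Levi, and one must carry out the enlarged-Levi reduction and verify that the tempered inner induction $\tau$ is irreducible.
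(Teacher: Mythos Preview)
Your proof is correct. The approach is closely related to the paper's but organized differently, and in one place you are more careful than the paper.

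The paper verifies the assumptions of \Cref{Thm_Main} by setting up a chain of equivalent systems of inequalities (System~I through System~IV) and showing that the hypotheses of \Cref{Thm:Key_Example} are exactly System~IV. In particular, it verifies ({\bfseries Assumption~\ref{Assumption:UISTrivial}}) directly on $G$ by embedding $\Ind_P^G(\tr_M\otimes\chi_0)\hookrightarrow\Ind_B^G\lambda'$ and invoking \Cref{Cor:Induction_from_antidominant_weight_has_a_UIS}, then applies \Cref{Thm_Main} itself. You instead go through \Cref{Cor:SSRank1WithoutLanglandsPosition} with the rank-$2$ Levi $L=M_{\{\alpha,\beta\}}$ to get the decomposition, and only afterwards use the same embedding on $G$ to establish the ``furthermore'' clause. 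That detour is valid but unnecessary: your second argument is exactly the paper's verification of ({\bfseries Assumption~\ref{Assumption:UISTrivial}}), so you could have applied \Cref{Thm_Main} directly and skipped \Cref{Cor:SSRank1WithoutLanglandsPosition}.

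For the Steinberg UIS, the paper simply asserts that $\Ind_P^G(\St_M\otimes\chi_0)$ is a standard module once $\gen{\chi_0,\check{\beta}}\le 0$ for $\beta\in\Phi^+\setminus\{\alpha\}$. Strictly speaking this is not a standard module when $\chi_0$ lies on a wall (i.e.\ when $S_0=\{\gamma\in S:\gen{\alpha,\check{\gamma}}=0\}\neq\emptyset$), since \Cref{Thm:Langlands_Subrepresentation_theorem} requires strict negativity on $\Delta\setminus\{\alpha\}$. Your enlarged-Levi argument with $L^*=M_{\{\alpha\}\cup S_0}$, showing $\chi_0$ extends to $L^*$ and is strictly antidominant outside, and that $\tau=\Ind_{P\cap L^*}^{L^*}\St_M$ is irreducible tempered via the commuting product structure and \Cref{Appendix:Induction_from_trivial_is_Irreducible}, is precisely the reduction the paper leaves implicit (mirroring the proof of \Cref{Cor:Induction_from_antidominant_weight_has_a_UIS}). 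So here your write-up is an improvement.
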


\begin{Remark}
	Note that the set of $\lambda_0$ satisfying the conditions in \Cref{Thm:Key_Example} has dimension $n-2$ and it is non-empty.
\end{Remark}

\begin{Remark}
	We note here that the Steinberg representation of $SL_2\bk{\R}$ has length $2$.
\end{Remark}

By choosing $S=\Delta\setminus\set{\alpha}$ in \Cref{Thm:Key_Example} we have:
\begin{Cor}
	\label{Cor:Key_Example}
	For any group $G$, as in \Cref{Section:Preliminaries}, and any simple root $\alpha\in\Delta$, let $\lambda_0=-w_\alpha\cdot\omega_\alpha$. 
	Then
	\begin{equation}
	\Ind_B^G {\lambda_0} = 
	\bk{\Ind_{P}^G \St_M\otimes \chi_0} \oplus 
	\bk{\Ind_{P}^G \tr_M\otimes \chi_0} ,
	\end{equation}
	where $\chi_0$ is chosen as in \Cref{Section:Levi_Subgroups_of_SS_rank_1}.
\end{Cor}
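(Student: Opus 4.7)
The plan is to deduce this as a direct specialization of \Cref{Thm:Key_Example} with the choice $S = \Delta \setminus \{\alpha\}$, so the work is entirely in verifying the four numbered hypotheses of that theorem. Set $\lambda' = -\omega_\alpha$; then by construction $\lambda_0 = w_\alpha \cdot \lambda' = -w_\alpha \cdot \omega_\alpha$, matching the statement.

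First I would check hypothesis (1), that there exists $\beta \in S$ with $\langle \beta, \check{\alpha}\rangle \neq 0$. Since $G$ is simple, its Dynkin diagram is connected, so the simple root $\alpha$ has at least one neighbour $\beta \in \Delta \setminus \{\alpha\} = S$, and for such $\beta$ the Cartan integer $\langle \beta, \check{\alpha}\rangle$ is nonzero. Next, hypotheses (2) and (3) are immediate from the defining property $\langle \omega_\alpha, \check{\beta}\rangle = \delta_{\alpha,\beta}$ of the fundamental weights: indeed $\langle \lambda', \check{\alpha}\rangle = -\langle \omega_\alpha, \check{\alpha}\rangle = -1$, and $\langle \lambda', \check{\beta}\rangle = 0$ for every $\beta \in S$. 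Finally, hypothesis (4) is vacuous for this choice of $S$, since $\Delta \setminus (S \cup \{\alpha\}) = \emptyset$.

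Having verified all the assumptions, \Cref{Thm:Key_Example} applied to the triple $(\lambda', \alpha, S)$ yields the desired decomposition
\[
\Ind_B^G \lambda_0 = \bk{\Ind_{P}^G \St_M \otimes \chi_0} \oplus \bk{\Ind_{P}^G \tr_M \otimes \chi_0},
\]
with $\chi_0$ as specified in \Cref{Section:Levi_Subgroups_of_SS_rank_1}. I do not anticipate any serious obstacle: the only nontrivial input is the use of connectedness of the Dynkin diagram in verifying hypothesis (1), and this is where the simplicity assumption on $G$ enters.
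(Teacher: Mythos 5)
Your proof is correct and takes exactly the route the paper intends: the paper introduces this corollary with the one-line remark "By choosing $S=\Delta\setminus\set{\alpha}$ in \Cref{Thm:Key_Example} we have:", and you simply supply the routine verification of the four hypotheses of \Cref{Thm:Key_Example} for $\lambda'=-\omega_\alpha$ and $S=\Delta\setminus\set{\alpha}$, including the correct observation that connectedness of the Dynkin diagram (together with $\rk G>1$) is what guarantees hypothesis (1) and that hypothesis (4) is vacuous for this $S$.
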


\begin{Remark}
	The decompositions appearing in \cite[Sec. 8]{MW_AppendixIII} \cite[Lem. 3.1]{Zampera1997}, \cite[pg. 1260-1 CASE 1]{MR1426903}, \cite[Lem. 5.12]{LaoResidualSpectrum} and \cite[Subsec. 4.4]{SegalResiduesD4} are all special cases of \Cref{Cor:Key_Example}.
%	\cite[]{MR1351833}
%	 \cite{Zampera1997,LaoResidualSpectrum,SegalEisen,SegalResiduesD4,MR1426903,MR1351833} are all special cases of \Cref{Cor:Key_Example}.
\end{Remark}

\begin{comment}

\begin{Remark}
	Triples $\bk{S,\alpha,\lambda}$, satisfying the assumptions of \Cref{Thm:Key_Example} and $S\subsetneq \Delta\setminus\set{\alpha}$, can be attained from applying \Cref{Cor:Key_Example} to a Levi subgroup $L$ using \Cref{Cor:SSRank1WithoutLanglandsPosition}.
%\todonum{Can smaller $S$ generate more solutions that induction by stages?}
\todonum{Consider removing this remark.}
\end{Remark}

\end{comment}

\begin{proof}[Proof of \Cref{Thm:Key_Example}]
	
In order to prove \Cref{Thm:Key_Example}, we construct a system of equalities and inequalities, \textbf{System I}, whose solutions are guaranteed to satisfy the assumptions of \Cref{Thm_Main}.
We then show that this system is equivalent to the system, \textbf{System IV}, given by the assumptions of \Cref{Thm:Key_Example}.
We list the assumptions of \Cref{Thm_Main} and reinterpret some of them as inequalities that will compose our system; other assumptions (i.e. ({\bfseries {Assumption} \ref{Assumption:SimpleRootEnumeration}}), ({\bfseries {Assumption} \ref{Assumption:Stabilizer_is_not_trivial}}) and ({\bfseries {Assumption} \ref{Assumption:H1H-1NotEqual}})) will be quoted verbatim in \textbf{System I}.
We drop ({\bfseries {Assumption} \ref{Assumption:LineS}}) since, as explained in \Cref{Section:Levi_Subgroups_of_SS_rank_1}, it follows from ({\bfseries {Assumption} \ref{Assumption:H1H-1NotEqual}}). \\
%We now go over the assumptions of \Cref{Thm_Main} in order to construct this system. \\
%We fix a simple root $\alpha\in\Delta$ and go over the assumptions of \Cref{Thm_Main} \\
%({\bfseries {Assumption} \ref{Assumption:SimpleRootEnumeration}})-({\bfseries {Assumption} \ref{Assumption:UISSteinberg}}).

\quad ({\bfseries {Assumption} \ref{Assumption:SimpleRootEnumeration}}) Fix a simple root $\alpha\in\Delta$. \\

\quad ({\bfseries {Assumption} \ref{Assumption:Stabilizer_is_not_trivial}}) Fix $\lambda_0\in H_1$ such that $\Stab_W\bk{w_\alpha\cdot\lambda_0}\neq\set{1}$. \\

%We fix $\alpha\in\Delta$ and $\lambda\in\mathfrak{a}^\ast_\R$ so that $\Stab_W\bk{w_\alpha\cdot\lambda}\neq\set{1}$. \\

%First, ({\bfseries {Assumption} \ref{Assumption:SimpleRootEnumeration}}) and ({\bfseries {Assumption} \ref{Assumption:Stabilizer_is_not_trivial}}) imply that we need to choose $\alpha\in\Delta$, $\lambda\in\mathfrak{a}^\ast_\R$ so that $\Stab_W\bk{w_\alpha\cdot\lambda_0}\neq\set{1}$.
% and so that ({\bfseries {Assumption} \ref{Assumption:w2ActsAsId}})-({\bfseries {Assumption} \ref{Assumption:UISSteinberg}}) hold.
%We now consider these assumptions. \\

\quad ({\bfseries {Assumption} \ref{Assumption:w2ActsAsId}}) Fix $1\neq w_0 \in \Stab_W\bk{w_\alpha\cdot\lambda_0}$ such that $N\bk{w_0,w_\alpha\cdot\lambda_0}=Id$. \\

Assume that $\lambda'=w_\alpha\cdot\lambda_0$ lies in the anti-dominant chamber.
Let $S=\set{\gamma\in\Delta\mvert \gen{\lambda',\check{\gamma}}=0}$.
It follows from ({\bfseries {Assumption} \ref{Assumption:Stabilizer_is_not_trivial}}) that $S\neq\emptyset$ and that $w_0\in W_{M_S}$.
By induction in stages, it holds that
\begin{equation}
\label{Eq_examples_induction_in_Stages}
\Ind_B^G \lambda' = \Ind_{P_S}^G \bk{\Ind_{B\cap {M_S}}^{M_S} \Id} \otimes \lambda' .
\end{equation}
We prove in \Cref{Appendix:Induction_from_trivial_is_Irreducible} that $\Ind_{B\cap {M_S}}^{M_S} \Id$ is irreducible.
It is also spherical and hence, by \Cref{Eq_induction_in_stages}, it follows that $N\bk{w_0,w_\alpha\cdot\lambda_0}=Id$. \\

\quad ({\bfseries {Assumption} \ref{Assumption:H1H-1NotEqual}}) Assume that $w_0$ does not commute with $w_\alpha$. \\

%This assumption holds by choosing $\coset{w,w_\alpha}\neq 1$. \\

%\quad ({\bfseries {Assumption} \ref{Assumption:LineS}}) Fix a line $\restline\subset \mathfrak{a}_\R^\ast$ such that $\restline\cap H_1 =S\cap H_{-1}=\set{\lambda_0}$ and that the angle between $\restline$ and $H_1$ is not supplementary to the angle between $\restline$ and $H_{-1}$. \\

%As explained in the discussion following this assumption, it follows from ({\bfseries {Assumption} \ref{Assumption:H1H-1NotEqual}}). \\

\quad ({\bfseries {Assumption} \ref{Assumption:UISTrivial}}) Assume that $\Ind_{P}^G \tr_M\otimes \chi_0$ admits a unique irreducible subrepresentation. \\

Let $\lambda'=w_\alpha\cdot \lambda_0$ and $S$ be as in \Cref{Eq_examples_induction_in_Stages}.
Since $\Ind_{P}^G \tr_M\otimes \chi_0$ embeds into $\Ind_B^G \lambda'$,
\Cref{Cor:Induction_from_antidominant_weight_has_a_UIS} implies ({\bfseries {Assumption} \ref{Assumption:UISTrivial}}). \\

%Let $\lambda'$ and $S$ be as in \Cref{Eq_examples_induction_in_Stages}.
%As explained above, $\Ind_{B\cap {M_S}}^{M_S} \Id$ is irreducible and spherical
%It is also unitary (and hence tempered).
%It follows, from the Langlands classification, that $\Ind_B^G \lambda'$ admits a unique irreducible subrepresentation $\sigma$.
%Note that $\sigma$ is a spherical representation.

\quad ({\bfseries {Assumption} \ref{Assumption:UISSteinberg}}) Assume that $\Ind_{P}^G \St_M\otimes \chi_0$ admits a unique irreducible subrepresentation. \\

If $\gen{\chi_0,\check{\beta}}\leq 0$ for any $\beta\in\Phi^{+}\setminus\set{\alpha}$ and $\St_M$ is irreducible, then ({\bfseries {Assumption} \ref{Assumption:UISSteinberg}}) follows from the Langlands' subrepresentation theorem since $\St_M$ is tempered (indeed, it is a discrete series representation) and $\Ind_{P}^G \St_M\otimes \chi_0$ is a standard module. \\

We summarize this discussion by the following system of equalities and inequalities:
\paragraph{\textbf{System I:}}
Pick $w\in W$ and $\lambda \in \mathfrak{a}_\R^\ast$ such that:
\begin{enumerate}
	\item $\coset{w,w_\alpha}\neq 1$.
	\item $ww_\alpha\cdot\lambda=w_\alpha\cdot\lambda$.
%		\item $\gen{w_\alpha\cdot\lambda,\check{\alpha}}=1$.
	\item $\gen{\lambda,\check{\alpha}}=1$.
%		\item $w\cdot\bk{w_\alpha\cdot\lambda}=w_\alpha\cdot\lambda$.
	\item $\gen{\lambda-\frac{\alpha}{2},\check{\beta}}\leq 0\quad \forall \beta\in\Phi^{+}\setminus\set{\alpha}$.
	\item $\gen{w_\alpha\cdot\lambda,\check{\beta}}\leq 0\quad \forall \beta\in\Delta$.
\end{enumerate}

We now argue that \textbf{System I} is equivalent to the system in the statement of \Cref{Thm:Key_Example}, \textbf{System IV}.
We do this in stages by showing the equivalence of \textbf{System I}, \textbf{System II}, \textbf{System III} and \textbf{System IV}.

Note that $\gen{\lambda,\check{\alpha}}=1$ implies $w_\alpha\cdot\lambda=\lambda-\alpha$.
We make a change of variables $\lambda'=w_\alpha\cdot\lambda$ and get an equivalent system:
\paragraph{\textbf{System II:}}
Pick $w\in W$ and $\lambda' \in \mathfrak{a}_\R^\ast$ such that:
\begin{enumerate}
	\item $\coset{w,w_\alpha}\neq 1$.
	\item $w\lambda'=\lambda'$.
	%		\item $\gen{w_\alpha\cdot\lambda,\check{\alpha}}=1$.
	\item $\gen{\lambda',\check{\alpha}}=-1$.
	%		\item $w\cdot\bk{w_\alpha\cdot\lambda}=w_\alpha\cdot\lambda$.
	\item $\gen{\lambda',\check{\beta}}\leq -\frac{1}{2} \gen{\alpha,\check{\beta}}\quad \forall \beta\in\Phi^{+}\setminus\set{\alpha}$.
	\item $\gen{\lambda',\check{\beta}}\leq 0\quad \forall \beta\in\Delta$.
\end{enumerate}

%The condition $\gen{\lambda',\check{\beta}}\leq 0\quad \forall \beta\in\Delta$ implies that 
Since $\lambda'$ is anti-dominant, $Stab_W\bk{\lambda'}$ is generated by simple reflections.
In particular, $Stab_W\bk{\lambda'}=\gen{s_\beta\mvert \gen{\lambda',\check{\beta}}=0,\, \beta\in\Delta}$ is not trivial if and only if $\lambda'$ is on a wall of the chamber.

%Let $S\subset\Delta$ be the set of simple roots such that $\gen{\lambda',\check{\beta}}=0$.
%Note that $\alpha\notin S$.

%Consider the following system on $\lambda' \in \mathfrak{a}_\R^\ast$:
We now consider the following system:
\paragraph{\textbf{System III:}}
Pick a subset $S\subset \Delta\setminus\set{\alpha}$ and $\lambda' \in \mathfrak{a}_\R^\ast$ such that:
\begin{enumerate}
%	\item There exist $\beta\in S$ such that $\beta$ is not orthogonal to $\alpha$.
	\item There exist $\beta\in S$ such that $\gen{\beta,\check{\alpha}}\neq 0$.
	\item $\gen{\lambda',\check{\alpha}}=-1$.
	
	\item $\gen{\lambda',\check{\beta}}=0\quad \forall \beta\in S$.
	\item $\gen{\lambda',\check{\beta}} \leq 0\quad \forall \beta\notin S\cup\set{\alpha}$.
	
	\item $\gen{\lambda',\check{\beta}}\leq -\frac{1}{2}\gen{\alpha,\check{\beta}}\quad \forall \beta\in\Phi^{+}\setminus\set{\alpha}$.
\end{enumerate}

The set of solutions of this system equals the set of solutions of \textbf{System II} as will be explained now.

\begin{itemize}
	\item Let $w\in W$ and $\lambda' \in \mathfrak{a}_\R^\ast$ constitute a solution of \textbf{System II}.
	We automatically see that \textbf{II.3} implies \textbf{III.2},	\textbf{II.5} implies \textbf{III.4} and \textbf{II.4} implies \textbf{III.5}.
	Let
	\[
	S = \set{\beta \mvert \gen{\lambda',\check{\beta}}=0} .
	\]
	This choice automatically guarantees \textbf{System III.3}.
	It remains to show that \textbf{System III.1} holds.
	
	Assume that $\gen{\beta,\check{\alpha}}=0$ for all $\beta\in S$.
	\textbf{II.5} implies that $\lambda'$ is anti-dominant and hence $Stab_W\bk{\lambda'} = \gen{w_\beta\mvert \beta\in S}$.
	\textbf{II.2} implies that $Stab_W\bk{\lambda'}$ is non-trivial.
	In fact, it follows that $S\neq \emptyset$.
	If $\gen{\beta,\check{\alpha}}=0$ for all $\beta\in S$ it would imply that $\coset{w,w_\alpha}= 1$ for all $w\in Stab_W\bk{\lambda'}$ contradicting	\textbf{II.1}.
	
	\item
	Let $S\subset \Delta\setminus\set{\alpha}$ and $\lambda' \in \mathfrak{a}_\R^\ast$ constitute a solution of \textbf{System III}.
	We automatically see that \textbf{III.2} implies \textbf{II.3} and \textbf{III.5} implies \textbf{II.4}.
	Also,
	\textbf{III.2}, \textbf{III.3} and \textbf{III.4} implies \textbf{II.5} and, in particular, $\lambda'$ lies in the anti-dominant chamber.
	
	Again, $Stab_W\bk{\lambda'} = \gen{w_\beta\mvert \beta\in S}$ and \textbf{III.1} implies that there exists $w\in Stab_W\bk{\lambda'}$ such that $\coset{w,w_\alpha}\neq 1$ (say, $w=w_\beta$) so \textbf{II.1} and \textbf{II.2} hold.
	In particular, any solution of \textbf{System II} is attained this way.
\end{itemize}

It is shown in \Cref{Appendix:Facts_on_roots_systems_and_Weyl_groups} that, in fact, \textbf{III.5} is redundant.
Hence, \textbf{System III} is equivalent to the following system:
%We conclude that \textbf{System III} is equivalent to the following system:
\paragraph{\textbf{System IV:}}
Pick a subset $S\subset \Delta\setminus\set{\alpha}$ and $\lambda' \in \mathfrak{a}_\R^\ast$ such that:
\begin{enumerate}
	\item There exist $\beta\in S$ such that $\gen{\beta,\check{\alpha}}\neq 0$.
	\item $\gen{\lambda',\check{\alpha}}=-1$.
	\item $\gen{\lambda',\check{\beta}}=0\quad \forall \beta\in S$.
	\item $\gen{\lambda',\check{\beta}} < 0\quad \forall \beta\notin S\cup\set{\alpha}$.
\end{enumerate}
\end{proof}

We now wish to consider a few particular examples of $G$, $\alpha$ and $\lambda_0$ given by \Cref{Thm:Key_Example}.
For simplicity, we assume that $F$ is non-Archimedean.

\begin{Ex}
	We consider simple, connected, simply-connected, split groups of rank $2$.
	In this case, $G$ is either of type $A_2$, $B_2=C_2$ or $G_2$.
	Namely, its Dynkin diagram is one of the following:
%	\begin{figure}[ht!]
%		\centering
%		\subfigure[Type {$A_2$}]{
%			\centering
%			\begin{tikzpicture}
%			\dynkin[open,label]{A}{2}
%			%		\dynkincloseddot{1}
%			\end{tikzpicture}}
%
%		
%		\subfigure[Type {$B_2=C_2$}]{
%			\centering
%			\begin{tikzpicture}
%			\dynkin[open,label]{C}{2}
%			%		\dynkincloseddot{1}
%			\end{tikzpicture}}
%
%		
%		\subfigure[Type {$G_2$}]{
%			\centering
%			\begin{tikzpicture}
%			\dynkin[open,label]{G}{2}
%			%		\dynkincloseddot{1}
%			\end{tikzpicture}}
%	\end{figure}
%	\begin{figure}[tbp!]
%		\centering
%		\begin{minipage}[t]{0.3\textwidth}
%		\centering
%		\begin{tikzpicture}
%		\dynkin[open,label]{A}{2}
%		%		\dynkincloseddot{1}
%		\end{tikzpicture}
%%		\caption{Type $A_2$}
%		\end{minipage}
%	
%		\begin{minipage}[t]{0.3\textwidth}
%			\centering
%			\begin{tikzpicture}
%			\dynkin[open,label]{C}{2}
%			%		\dynkincloseddot{1}
%			\end{tikzpicture}
%%			\caption{Type $B_2=C_2$}
%		\end{minipage}
%	
%		\begin{minipage}[t]{0.3\textwidth}
%			\centering
%			\begin{tikzpicture}
%			\dynkin[open,label]{G}{2}
%			%		\dynkincloseddot{1}
%			\end{tikzpicture}
%%			\caption{Type $G_2$}
%		\end{minipage}
%	\end{figure}

	\begin{center}
	\setlength{\tabcolsep}{12pt}
	\begin{tabular}{ccc}
	\begin{tikzpicture}
	\dynkin[open,label]{A}{2}
	%		\dynkincloseddot{1}
	\end{tikzpicture}
	&
	\begin{tikzpicture}
	\dynkin[open,label]{C}{2}
	%		\dynkincloseddot{1}
	\end{tikzpicture}
	&
	\begin{tikzpicture}
	\dynkin[open,label]{G}{2}
	%		\dynkincloseddot{1}
	\end{tikzpicture}
	\\
	Type $A_2$
	&
	Type $B_2=C_2$
	&
	Type $G_2$
	\end{tabular}
	\end{center}

	For each of these groups, and every $\alpha\in\Delta$, $S$ may be only $S=\Delta\setminus\set{\alpha}$.
	The possible $\lambda_0$ given by \Cref{Thm:Key_Example} are listed in the following table.
	\begin{center}
	\begin{tabular}{|c|c|c|}
		\hline
		& $\alpha_1$ & $\alpha_2$ \\ \hline
		$A_2$ & $\lambda_0=\bk{1,-1}$ & $\lambda_0=\bk{-1,1}$ \\ \hline
		$B_2=C_2$ & $\lambda_0=\bk{1,-1}$ & $\lambda_0=\bk{-2,1}$ \\ \hline
		$G_2$ & $\lambda_0=\bk{1,-1}$ & $\lambda_0=\bk{-3,1}$ \\ \hline
	\end{tabular}
	\end{center}

	For each of these points, we get a decomposition of the form
	\[
	\Ind_B^G\lambda_0 = \bk{\Ind_{P_{\set{\alpha_i}}}^G \Id_{M_{\set{\alpha_i}}} \otimes \chi_0} \oplus \bk{\Ind_{P_{\set{\alpha_i}}}^G \St_{M_{\set{\alpha_i}}}\chi_0},
	\]
	as in \Cref{Cor:Key_Example}.
	However, some of these points could be associated to a degenerate principal series representation induced from the other maximal parabolic.
	Namely, there exist an $s$ such that $I_{P_{\set{\alpha_{2-i}}}}\bk{s} = \Ind_{P_{\set{\alpha_{2-i}}}}^G\modf{P_{\set{\alpha_{2-i}}}}^s$ is a subrepresentation of $\Ind_B^G\lambda_0$.
	These degenerate principal series are given in the following table:
	\begin{center}
		\begin{tabular}{|c|c|c|}
			\hline
			& $\alpha_1$ & $\alpha_2$ \\ \hline
			$A_2$ & $I_{P_{\set{\alpha_{2}}}}\bk{\frac{1}{6}}$ & $I_{P_{\set{\alpha_{1}}}}\bk{\frac{1}{6}}$ \\ \hline
			$B_2=C_2$ & $I_{P_{\set{\alpha_{2}}}}\bk{0}$  & \\ \hline
			$G_2$ & $I_{P_{\set{\alpha_{2}}}}\bk{-\frac{1}{10}}$ & \\ \hline
		\end{tabular}
	\end{center}
	
	Let $\pi_1\oplus\pi_{-\kappa_1}$ be the maximal semi-simple subrepresentation of $\Ind_B^G\lambda_0$.
	Obviously, $\pi_1$ is a subrepresentation of $I_{P_{\set{\alpha_{2-i}}}}\bk{s}$.
	We wish do determine whether $\pi_{-\kappa_1}$ is also a subrepresentation of $I_{P_{\set{\alpha_{2-i}}}}\bk{s}$ or not.
	We answer this question for the $p$-adic case (in the Archimedean case the results are similar, while the arguments are more involved).

	\begin{itemize}
		\item \textbf{$A_2$ case}:
		In this case, $I_{P_{\set{\alpha_{1}}}}\bk{-\frac{1}{6}} = I_{P_{\set{\alpha_{2}}}}\bk{\frac{1}{6}}$ and $I_{P_{\set{\alpha_{1}}}}\bk{\frac{1}{6}} = I_{P_{\set{\alpha_{2}}}}\bk{-\frac{1}{6}}$ are both irreducible.
		Hence, $\pi_{-1}$ is not a subrepresentation of any of these degenerate principal series representations.
		
		\item \textbf{$B_2=C_2$ case}:
		This case was studied in \cite[pg. 9]{MR3267117}.
		In this case, we have $\pi_1\oplus\pi_{-1}$ as a subrepresentation of $I_{P_{\set{\alpha_{2}}}}\bk{0}$.
		In order to see this, one can compare the multiplicity of the exponent $\lambda_0$ in the Jacquet functor (along $N$) of $\Ind_B^G\lambda_0$, $\pi_1$, $\pi_{-1}$ and $I_{P_{\set{\alpha_{2}}}}\bk{0}$ (2, 1, 1 and 2 respectively).
		
		\item \textbf{$G_2$ case}:
		This case was studied in \cite[Lem. 3.1]{Zampera1997}.
		It is shown there that $\pi_{1}\oplus\pi_{-2}$ is a subrepresentation of $I_{P_{\set{\alpha_{2}}}}\bk{-\frac{1}{10}}$.
		This can be shown by comparing the multiplicity of the exponent $\lambda_0$ in the Jacquet functor (along $N$) of $\Ind_B^G\lambda_0$, $\pi_1$, $\pi_{-2}$ and $I_{P_{\set{\alpha_{2}}}}\bk{-\frac{1}{10}}$ (2, 1, 1 and 2 respectively).
		
	\end{itemize}	
	
\end{Ex}

In what follows, we use the following notations on the Dynkin-diagram:
\begin{itemize}
	\item We use $\bullet$ to denote the simple root $\alpha$.
	\item We use $\times$ to denote simple roots in $S$.
	\item We use $\circ$ to denote other simple roots.
	\item The $k$-vertex in a Dynkin diagram is associated to the simple root denoted $\alpha_k$.
	We further denote by $\omega_k$ the $k^{th}$ fundamental weight and by $w_k=w_{\alpha_k}$ the simple reflection associated to $\alpha_k$.
\end{itemize}

\begin{Ex}
Let $G=SL_4\bk{F}$ with the standard choice of $B$, $T$ and the enumeration of simple roots.
The group $G$ is of type $A_3$ and have the following Dynkin diagram:
\[
\begin{tikzpicture}
\dynkin[open,label]{A}{3}
%		\dynkincloseddot{1}
\end{tikzpicture}.
\]

For $\alpha=\alpha_1$, we have two possible choices for the set $S$: either $\set{\alpha_2}$ or $\set{\alpha_2, \alpha_3}$.

For $\alpha=\alpha_2$, we have three possible choices for the set $S$: either $\set{\alpha_1}$, $\set{\alpha_3}$ or $\set{\alpha_1, \alpha_3}$.

The analysis for $\alpha=\alpha_3$ is similar to the case of $\alpha_1$.
\begin{center}
	\begin{tabular}{|>{\centering\arraybackslash}m{0.5cm}|>{\centering\arraybackslash}m{1cm}|>{\centering\arraybackslash}m{2.5cm}|>{\centering\arraybackslash}m{2cm}|>{\centering\arraybackslash}m{8cm}|}
		%	{|c|c|c|c|c|}
		
		\hline
		$\alpha$ & & $S$ & $w$ & $\lambda_0$
		\\
		\hline
		
		$\alpha_1$ &
		\begin{tikzpicture}
		\dynkin[open,label,parabolic=4]{A}{3}
		\dynkincloseddot{1}
		\end{tikzpicture} &
		$S=\set{\alpha_2}$ & $w_2$ &
		$\lambda_0=-w_1\cdot\omega_1 - t\omega_3 = \bk{1,-1,-t} \quad\forall t>0$ \\ \hline
		
		$\alpha_1$ &
		\begin{tikzpicture}
		\dynkin[open,label,parabolic=12]{A}{3}
		\dynkincloseddot{1}
		\end{tikzpicture} &
		$S=\set{\alpha_2,\alpha_3}$ & $\begin{matrix} w_2, w_{23}, \\ w_{32}, w_{232}	\end{matrix}$ &
		$\lambda_0= -w_1\cdot\omega_1 = \bk{1,-1,0} $\\ \hline
		%	$S=\set{\alpha_2,\alpha_3}$ & $w_2$, $w_{23}$, $w_{32}$, $w_{232}$ &
		%	$\lambda_0= -w_1\cdot\omega_1 = \omega_1-\omega_2 $\\ \hline
		
		$\alpha_2$ &
		\begin{tikzpicture}
		\dynkin[open,label,parabolic=2]{A}{3}
		\dynkincloseddot{2}
		\end{tikzpicture} &
		$S=\set{\alpha_1}$ & $w_1$ &
		$\lambda_0= -w_2\cdot\omega_2-t\omega_3 =  \bk{-1,1,-t-1} \quad\forall t>0$ \\ \hline
		
		$\alpha_2$ &
		\begin{tikzpicture}
		\dynkin[open,label,parabolic=8]{A}{3}
		\dynkincloseddot{2}
		\end{tikzpicture} &
		$S=\set{\alpha_3}$ & $w_3$ &
		$\lambda_0= -t\omega_1-w_2\cdot\omega_2 =  \bk{-t-1,1,-1} \quad\forall t>0$ \\ \hline
		
		$\alpha_2$ &
		\begin{tikzpicture}
		\dynkin[open,label,parabolic=8]{A}{3}
		\dynkincloseddot{2}
		\end{tikzpicture} &
		$S=\set{\alpha_1, \alpha_3}$ & $w_1$, $w_3$, $w_{13}$ &
		$\lambda_0= -w_2\cdot\omega_2 = \bk{-1,1,-1}$ \\ \hline
	\end{tabular}
\end{center}
\end{Ex}

\begin{Ex}
	Another interesting example occurs in the case where $G$ is a quasi-split group of type $D_4$.
	The Dynkin diagram of the absolute root system of $G$, together with our choice of $\alpha$ and $S$, is given by
	\[
	\begin{tikzpicture}
	\dynkin[open,label, parabolic=26]{D}{4}
	\dynkincloseddot{2}
	\end{tikzpicture}
	\]
	In this case, it follows, from \Cref{Thm:Key_Example}, that
	\[
	\Ind_B^G\lambda_0 = \bk{\Ind_{P_{\set{\alpha_2}}}^G \Id_{M_{\set{\alpha_2}}} \otimes \chi_0} \oplus \bk{\Ind_{P_{\set{\alpha_2}}}^G \St_{M_{\set{\alpha_2}}} \otimes \chi_0} ,
	\]
	where $\lambda_0 = -w_2\cdot \omega_2 = \bk{-1,1,-1,-1}$ and $\iota_{M_{\set{\alpha_2}}}\bk{\chi_0} = \bk{-\frac{1}{2},0,-\frac{1}{2},-\frac{1}{2}}$.
%	where $\lambda_0 = -w_2\cdot \omega_2 = \dfroot{-1}{1}{-1}{-1}$ and $\chi_0\res{T} = \dfroot{-\frac{1}{2}}{0}{-\frac{1}{2}}{-\frac{1}{2}}$.
	In particular, let $\pi_1\oplus\pi_{-2}$ be the maximal semi-simple subrepresentation of $\Ind_B^G\lambda_0$.
	Note that the eigenvalue $-2$ is computed with respect to $w_0=w_1w_3w_4$ and $\restline=\bk{-1,s,-1,-1}$.
	
	As in the rank $2$ case, $\Ind_B^G\lambda_0$ contains a degenerate principal series representation.
	Namely, $\Ind_{P_{\set{\alpha_1,\alpha_3,\alpha_4}}}^G \modf{{P_{\set{\alpha_1,\alpha_3,\alpha_4}}}}^{-\frac{1}{10}}$ is a subrepresentation of $\Ind_B^G\lambda_0$.
	It is clear that $\pi_1$ is a subrepresentation of $\Ind_{P_{\set{\alpha_1,\alpha_3,\alpha_4}}}^G \modf{{P_{\set{\alpha_1,\alpha_3,\alpha_4}}}}^{-\frac{1}{10}}$ and the question is whether $\pi_{-2}$ is also a subrepresentation.
	This question was studied in detail in \cite[Subsec. 4.4]{SegalResiduesD4} and it is shown there that $\pi_{-2}$ is a subrepresentation of $\Ind_{P_{\set{\alpha_1,\alpha_3,\alpha_4}}}^G \modf{{P_{\set{\alpha_1,\alpha_3,\alpha_4}}}}^{-\frac{1}{10}}$ when the relative root system of $G$ is of type $G_2$ and that $\pi_1$ is the unique irreducible subrepresentation of $\Ind_{P_{\set{\alpha_1,\alpha_3,\alpha_4}}}^G \modf{{P_{\set{\alpha_1,\alpha_3,\alpha_4}}}}^{-\frac{1}{10}}$ when the relative root system of $G$ is of type $B_3$ or $D_4$.
\end{Ex}

\begin{Ex}
	As another example, let $G$ be the split, simply-connected, simple group of type $E_6$.
	The Dynkin diagram of $G$, together with our choice of $\alpha$ and $S$, is given by
	\[
	\begin{tikzpicture}
	\dynkin[open,label, parabolic=44]{E}{6}
	\dynkincloseddot{4}
	\end{tikzpicture}
	\]
	
	Let $\lambda'=\bk{-1,0,0,-1,0,-1}$ and $\lambda_0=w_\alpha\cdot\lambda'=\bk{-1,-1,-1,1,-1,-1}$.
	By \Cref{Thm:Key_Example}, it holds that
	\[
	\Ind_B^G {\lambda_0} = 
	\bk{\Ind_{P}^G \St_M\otimes \chi_0} \oplus 
	\bk{\Ind_{P}^G \tr_M\otimes \chi_0}
	\]
	and the maximal semi-simple subrepresentation of $\Ind_B^G {\lambda_0}$
	can be written as $\pi_1\oplus \pi_{-1}$.
	The degenerate principle series $\Pi=\Ind_{P_{\set{\alpha_1,\alpha_2,\alpha_3,\alpha_5,\alpha_6}}}^G \modf{{P_{\set{\alpha_1,\alpha_2,\alpha_3,\alpha_5,\alpha_6}}}}^{-\frac{3}{14}}$ is a subrepresentation of $\Ind_B^G {\lambda_0}$ so the maximal semi-simple subrepresentation of $\Pi$ is either $\pi_1$ or $\pi_1\oplus \pi_{-1}$.
	It is shown in \cite{HalawiSegal} that, in fact, $\pi_1$ is the unique irreducible subrepresentation of $\Pi$.
\end{Ex}

\section{Decomposition with Respect to Levi Subgroups of Higher Semi-Simple Rank}
\label{Section:Higher_rank_Levis}

%In this section, we discuss two generalizations of \Cref{Thm_Main} to Levis of higher semi-simple rank.
%One extension is arising from families of mutually commuting projections and allows for a decomposition of $\Ind_B^G {\lambda_0}$ into multiple common eigenspaces of these projections. The second extension is a generalization of the proof of \Cref{Thm_Main} to the case where $M$ has arbitrary semi-simple rank while $\Ind_{B\cap M}^M\lambda_0$ has length $2$; this generalization gives rise to a decomposition of $\Ind_B^G {\lambda_0}$ into two eigenspaces.

In this section, we discuss a generalization of \Cref{Thm_Main}.
This generalization allows to consider points $\lambda_0$ where one could apply \Cref{Thm_Main} to triples $\bk{\lambda_0,\alpha,w_0^\alpha}$ with more than one simple root $\alpha$.
In such a case, one would be able to prove a finer decomposition of $\Ind_B^G \lambda_0$ into a direct sum of generalized degenerate principal series.

\subsection{Commuting Projections}

Let $\Theta=\set{\alpha_1,...,\alpha_k}\subset \Delta$, with $1\leq k\leq n$ and $\mathcal{P}\bk{k}=\set{X\subset\set{1,...,k}}$.
We recall the parabolic subgroup, $P_\Theta=\bigcap_{i=1}^k {P}_{\set{\alpha_i}}$, associated to $\Theta$.

For $X\in\mathcal{P}\bk{k}$, let $\St_X=\bk{\otimes_{i\in X} \tr_i} \otimes \bk{\otimes_{i\notin X}\St_i}$ where $\tr_i$ and $\St_i$ are the trivial and Steinberg representations of $M_i=M_{\set{\alpha_i}}^{der}$.

\begin{Cor}
	\label{Cor:Commuting_Projections}
	Assume that $\lambda_0$ satisfies assumptions 1-6 and 7' with respect to each triple $\bk{\lambda_0,\alpha_i,w_0^{\bk{i}}}$ for $\Theta=\set{\alpha_1,...,\alpha_k}\subset \Delta$.
	For each $1\leq i\leq k$, let $P_i$ be the projection on $\Ind_B^G \lambda_0$ constructed in \Cref{Eq_definition_of _projection} for $\bk{\lambda_0,\alpha_i,w_0^{\bk{i}}}$.
	Further assume that the projections $P_i$ are mutually commuting.
	Then
	\begin{equation}
	\label{Eq:Comm_Proj_Eq}
	\Ind_B^G\lambda_0 = \bigoplus_{X\in \mathcal{P}\bk{k}} \Ind _{P}^G \St_X \otimes \chi_0,
	\end{equation}
	where $\chi_0\in X^\ast\bk{M_\Theta}$ such that $\mathcal{J}_T^{M_\Theta}\chi_0=\lambda_0$.
	
	%Assume that $\alpha,...,\alpha_k\in \Delta$ are orthogonal roots satisfying the conditions of the \Cref{Thm_Main}, then
	%\begin{equation}
	%\Ind_B^G\lambda_0 = \bigoplus_{X\in \mathcal{P}\bk{k}} \Ind _P^G \St_X \otimes \chi_0,
	%\end{equation}
	%where:
	%\begin{enumerate}
	%\item $\mathcal{P}\bk{k}=\set{X\subset\set{1,...,k}}$.
	%\item $P=\bigcap_{i=1}^k P_i=P_{\alpha,...,\alpha_k}$ with Levi $M$.
	%\item $\chi_0\in X^\ast\bk{M}$ such that $\mathcal{J}_T^M\chi_0=\lambda_0$.
	%\item For $X\in\mathcal{P}\bk{k}$, let $\St_X=\otimes_{i\in X} \tr_i \otimes \otimes_{i\notin X}\St_i$ where $\tr_i$ and $\St_i$ are the trivial and Steinberg representations of $M_i$.
	%\todonum{Should this be $\boxtimes$?}
	%\end{enumerate}
	%\todonum{Check notations here and in the proof}
\end{Cor}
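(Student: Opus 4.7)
The plan is to combine the commuting projections $P_1,\dots,P_k$ into a joint eigenspace decomposition of $V:=\Ind_B^G\lambda_0$, and then to identify each joint eigenspace with the stated parabolic induction. For each $X\in\mathcal{P}(k)$ I set
\[
Q_X:=\prod_{i\in X}(\operatorname{Id}-P_i)\cdot\prod_{i\notin X}P_i \in \operatorname{End}_G(V).
\]
Because the $P_i$ commute and are idempotent, the order of the factors is immaterial and $\{Q_X\}_{X\in\mathcal{P}(k)}$ is a complete family of orthogonal idempotents: $Q_X Q_Y=\delta_{X,Y}Q_X$ and $\sum_X Q_X=\operatorname{Id}$. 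This already produces a $G$-equivariant decomposition $V=\bigoplus_X V_X$ with
\[
V_X:=Q_X(V)=\bigcap_{i\in X}\operatorname{Ker}P_i\;\cap\;\bigcap_{i\notin X}\operatorname{Im}P_i.
\]

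Next I transfer this decomposition to the Levi $M=M_\Theta$. By induction by stages, $V=\Ind_P^G \tau$ with $\tau:=\Ind_{B\cap M}^M \lambda_0$. Using the factorization diagram \Cref{Eq_induction_in_stages} together with the cocycle identity \Cref{Eq_cocycle_condition}, the operator whose limit defines $P_i$ in the proof of \Cref{Thm_Main} is obtained via $\Ind_P^G$ from the analogous operator built inside $M$, with $\alpha_i$ regarded as a simple root of $M$. Consequently each $P_i$ descends to a projection $P_i^M$ on $\tau$, the $P_i^M$ again commute, and the decomposition above becomes $V_X=\Ind_P^G \tau_X$ where $\tau_X\subseteq \tau$ is the corresponding joint eigenspace. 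The identification $\tau_X\cong \St_X\otimes\chi_0$ is then proved by induction on $k$: the case $k=1$ is \Cref{Thm_Main} applied inside $M$, and for the inductive step we apply \Cref{Thm_Main} inside $M$ with respect to $\alpha_k$ to split $\tau=\tau'\oplus\tau''$ into the pieces carrying $\St_{M_k}\otimes\chi_0^{(k)}$ and $\tr_{M_k}\otimes\chi_0^{(k)}$. Since $P_1^M,\dots,P_{k-1}^M$ commute with $P_k^M$ they preserve this splitting, and a further application of induction by stages shows that their restrictions realize the commuting family of projections associated to the rank-$(k{-}1)$ set $\Theta\setminus\{\alpha_k\}$ inside each summand; the inductive hypothesis then applies.

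The main obstacle is the compatibility asserted in the second step: one must verify that the delicate limiting construction from \Cref{Eq_definition_of _projection}, which relies on a Laurent expansion along a line $\restline\subset\mathfrak{a}_\R^\ast$, is genuinely compatible with $\Ind_P^G$ and with restriction to the summands $\tau'$ and $\tau''$. This is a bookkeeping exercise tracking the coefficients $A_0,B_{\vec 0},C_{-1}$ and the auxiliary line $\restline$ through \Cref{Eq_induction_in_stages}; implicit in the commuting hypothesis on the $P_i$ is also the constraint that the reflections $w_{\alpha_i}$ pairwise commute, which forces the $\alpha_i$ to be mutually orthogonal so that $M^{\operatorname{der}}\cong\prod_i M_i^{\operatorname{der}}$ and the external tensor product $\St_X=\bigotimes_{i\in X}\tr_i\otimes\bigotimes_{i\notin X}\St_i$ is meaningful as a representation of $M$.
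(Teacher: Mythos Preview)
Your first step---forming the orthogonal idempotents $Q_X$ from the commuting $P_i$ and thereby splitting $V$ into joint eigenspaces---is exactly what the paper does (with the roles of $X$ and its complement swapped, which is immaterial). The divergence is in identifying each summand. The paper does this directly, without any descent to a Levi and without induction on $k$: from \Cref{Cor:SSRank1WithoutLanglandsPosition} one already knows $\Image P_i$ and $\Ker P_i=\Image(\operatorname{Id}-P_i)$ as explicit subrepresentations of $\Ind_B^G\lambda_0$, namely the inductions from $P_{\{\alpha_i\}}$ of the Steinberg and trivial pieces. The identification of $V_X$ is then the one-line intersection
\[
\Image(P_X)=\bigcap_{i\in X}\Image(P_i)\;\cap\;\bigcap_{i\notin X}\Image(\operatorname{Id}-P_i)
=\Ind_{P_\Theta}^G\St_X\otimes\chi_0,
\]
computed inside $\Ind_B^G\lambda_0$.

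Your detour through $M=M_\Theta$ has a real gap. You assert that each $P_i$ is of the form $\Ind_{P_\Theta}^G(P_i^M)$ for an operator on $\tau=\Ind_{B\cap M}^M\lambda_0$, appealing to \Cref{Eq_induction_in_stages}. But that factorization of $N(w,\lambda)$ through $M$ is only available when $w\in W_M$; here $w=w_{\alpha_i}w_0^{(i)}w_{\alpha_i}$, and nothing in the hypotheses forces $w_0^{(i)}\in W_{M_\Theta}$. In the situations the paper actually applies the corollary (see the discussion following \Cref{Cor:Commuting_Projections} and the examples in \Cref{Sec:Examples}), $w_0^{(i)}$ is built from reflections in a set $S_i\subset\Delta\setminus\Theta$, so it never lies in $W_{M_\Theta}$. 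Hence your descent to $M_\Theta$ does not go through, and with it the induction on $k$. Relatedly, your claim that commuting $P_i$ forces commuting $w_{\alpha_i}$ is unsupported; the paper only records the opposite implication, for the elements $w_{\alpha_i}w_0^{(i)}w_{\alpha_i}$, in \Cref{Rem:commuting_Weyl_elements_imply_commuting_projections}. The pairwise orthogonality of the $\alpha_i$ should be read as part of the setup (so that $\St_X$ is a well-defined representation of $M_\Theta$), not as a consequence of the commutation hypothesis.
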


\begin{Remark}
	\label{Rem:commuting_Weyl_elements_imply_commuting_projections}
%	Let $w_{\alpha_1}$, $w_{\alpha_2}$,....,$w_{\alpha_k}$ and $w_0^{\bk{1}}$, $w_0^{\bk{2}}$,...,$w_0^{\bk{k}}$ be as in assumptions 3-6.
	If $w_{\alpha_1} w_0^{\bk{1}}w_{\alpha_1}$, $w_2w_0^{\bk{2}}w_2$,..., $w_kw_0^{\bk{k}}w_k$ are all commuting, then so are $P_1$, $P_2$,..., $P_k$.
\end{Remark}

\begin{proof}
	%Write $P_i$ for the projection associated to $\alpha_i$ as in the proof of \Cref{Thm_Main}.
	%Since the various $\alpha_i$ are orthogonal to each other, the $w_i$ are commuting and hence so are the projections $P_i$.
	%On the other hand, it is easy to see that their kernels and images do not coincide.
	
	For $X\in \mathcal{P}$, let
	\[
	P_X = \prodl_{i\in X} P_i \prodl_{i\notin X} \bk{Id-P_i} .
	\]
	One simply checks that $\set{P_X\mvert X\in\mathcal{P}\bk{k}}$ is a set of mutually orthogonal (and hence commuting) projections on $\Ind_B^G\lambda_0$ such that
	\begin{equation}
	\suml_{X\in \mathcal{P}\bk{k}} P_X = Id .
	\end{equation}
	It follows that
	\begin{equation}
	\Ind_B^G\lambda_0 = \bigoplus_{X\in \mathcal{P}\bk{k}} \Image\bk{P_X} .
	\end{equation}
	On the other hand, for $X\in \mathcal{P}\bk{k}$, we have
	\begin{equation}
	\begin{split}
	\Image\bk{P_X}
	& = \bigcap_{i\in X} \Image\bk{P_i} \cap \bigcap_{i\notin X} \Image\bk{Id-P_i} \\
	& = \bigcap_{i\in X} \bk{\Ind_{{P}_i}^{G}\Id_i\otimes\chi_0} \cap \bigcap_{i\notin X} \bk{\Ind_{{P}_i}^G \St_i\otimes\chi_0 } \\
	& = \Ind_{P}^G \St_X \otimes \chi_0.
	\end{split}
	\end{equation}
\end{proof}

\begin{Remark}
	If the projections $P_1,...,P_k$ were not commuting, one can show that the resulting enodmorphisms $P_X$ would be unipotent and not projective.
	This shows that some of the (not necessarily irreducible) constituents $\Ind_{P}^G \St_X \otimes \chi_0$ of $\Ind_B^G\lambda_0$ are not direct summands of $\Ind_B^G\lambda_0$.
%	\todonum{Improve this discussion}
	%https://en.wikipedia.org/wiki/Krull%E2%80%93Schmidt_theorem
\end{Remark}

\subsection{Examples}
We now wish to use \Cref{Thm:Key_Example}, \Cref{Cor:SSRank1WithoutLanglandsPosition} and \Cref{Rem:commuting_Weyl_elements_imply_commuting_projections} in order to find points $\lambda_0$ which satisfy the assumptions of \Cref{Cor:Commuting_Projections}.

%Applying \Cref{Cor:Key_Example}, \Cref{Cor:SSRank1WithoutLanglandsPosition} and \Cref{Cor:Commuting_Projections} one can find further examples of points $\lambda_0$ where $\Ind_B^G {\lambda_0}$ decomposes as a direct sum, similar to \Cref{Eq:Comm_Proj_Eq}.
%In order to do this, we consider the various Dynkin diagrams and choose certain vertices.
%Each vertex is chosen so that \Cref{Cor:SSRank1WithoutLanglandsPosition} will associate to it a projections and the choice of the vertices should allow for this projections to be commutative.
%In particular, we choose a subset of vertices so that the distance between two chosen vertices is at least $4$.

For the sake of this computation, it is more convenient to consider triples $\bk{\lambda_0,\alpha_i,S_i}$, where $S_i\subset\Delta$ as in \Cref{Sec:Examples}, and let $w_0^{\bk{i}}\in W_S$ as in the proof of \Cref{Thm:Key_Example}.
 
In order to mark our choice of $\alpha_i$ and $S_i$ we use the following markings on the Dynkin diagram of $G$ (similar to the notations used in \Cref{Sec:Examples}):
\begin{itemize}
	\item We use $\bullet$ to denote the simple roots in $\Theta$.
	\item We use $\times$ to denote simple roots which lie in one of the $S_i$.
	\item We use $\circ$ to denote simple roots not in $\cup S_i$.
	\item The $k$-vertex in a Dynkin diagram is associated to the simple root denoted $\alpha_k$.
	We further denote by $\omega_k$ the $k^{th}$ fundamental weight and by $w_k=w_{\alpha_k}$ the simple reflection associated to $\alpha_k$.
\end{itemize}

We note that it is enough to consider root systems of type $A_n$, $D_n$ and $E_n$.
Since the underlying graph of type $B_n$, $C_n$, $G_2$ or $F_4$ is the same as that of $A_n$, it is enough to consider those.

Furthermore, in the following discussion, we make the following assumptions:
\begin{itemize}
	
	\item Consider the "horns", $\alpha_{n-1}$ and $\alpha_n$ of the Dynkin diagram of type $D_n$.
	Generically $w_{n-1}w_0^{\bk{n-1}}w_{n-1}$ and $w_{n}w_0^{\bk{n}}w_n$ will not commute and hence we do not treat this case.
	Hence, a "generic" choice of vertices on the Dynkin diagram of type $D_n$ can be done in the diagram of type $A_{n-1}$.
	
	It should be noted that, for particular choices of $w_0^{\bk{n-1}}$ and $w_0^{\bk{n}}$, these words do commute.
	
	\item For similar reasons, we consider only the cases where $\set{\alpha_i}\cup S_i$ are disjoint and for any $i\neq j$ the sub-Dynkin diagram with vertices $\set{\alpha_i,\alpha_j}\cup S_i \cup S_j$ is disjoint.
	In particular, we assume that $rank\bk{G}\geq 5$.
%	Each vertex is chosen so that \Cref{Cor:SSRank1WithoutLanglandsPosition} will associate to it a projections and the choice of the vertices should allow for this projections to be commutative.
%	In particular, we choose a subset of vertices so that the distance between two chosen vertices is at least $4$.
	
%	\item In order to be able to choose more than one vertex, the rank of the group must be at least $5$.
%	Hence, we essentially need to consider three types of root systems: $A_n$, $D_n$ and $E_n$.
%	Since the underlying graph of type $B_n$, $C_n$, $G_2$ or $F_4$ is the same as that of $A_n$, that computation will suffice.
%	Also, since one cannot choose both "horns" of the Dynkin diagram of type $D_n$, it then reduces to a choice of vertices in a diagram of type $A_{n-1}$.
\end{itemize}

\begin{Ex}
Let $G$ be of type $A_5$ (i.e. $G=SL_6$).
There are three possible choices of $2$ vertices:
\begin{enumerate}
	\item Choosing the $1^{st}$ and $5^{th}$ vertices in the Dynkin diagram:
	\begin{center}
		\begin{tikzpicture}
		\dynkin[open,label,parabolic=20]{A}{5}
		\dynkincloseddot{1}
		\dynkincloseddot{5}
		\end{tikzpicture}
	\end{center}
	The possible associated points are $\lambda_0=-\bk{w_1\cdot\omega_1+w_5\cdot\omega_5}-t\omega_3$, where $t>0$.
	The two projections in \Cref{Cor:Commuting_Projections} are the ones associated to $N\bk{w_1w_2w_1}$ and $N\bk{w_4w_5w_4}$.
	The decomposition which follows is
	\[
	\Ind_B^G\lambda_0 = \bigoplus_{X\subseteq \set{1,5}} \Ind _{P_{1,5}}^G \St_X \otimes \chi_0 .
	\]
	
	\item Choosing the $1^{st}$ and $4^{th}$ vertices in the Dynkin diagram:
	\begin{center}
		\begin{tikzpicture}
		\dynkin[open,label,parabolic=36]{A}{5}
		\dynkincloseddot{1}
		\dynkincloseddot{4}
		\end{tikzpicture}
	\end{center}
	The associated points are $\lambda_0=-\bk{w_1\cdot\omega_1+w_4\cdot\omega_4}-t\omega_3$, where $t>0$.
	The two projections in \Cref{Cor:Commuting_Projections} are the ones associated to $N\bk{w_1w_2w_1}$ and $N\bk{w_5w_4w_5}$.
	The decomposition which follows is
	\[
	\Ind_B^G\lambda_0 = \bigoplus_{X\subseteq \set{1,4}} \Ind _{P_{1,5}}^G \St_X \otimes \chi_0 .
	\]
	
	\item Choosing the $2^{nd}$ and $5^{th}$ vertices in the Dynkin diagram:
	\begin{center}
		\begin{tikzpicture}
		\dynkin[open,label,parabolic=18]{A}{5}
		\dynkincloseddot{2}
		\dynkincloseddot{5}
		\end{tikzpicture}
	\end{center}
	The associted point is $\lambda_0=-\bk{w_2\cdot\omega_2+w_5\cdot\omega_5}$, where $t>0$.
	The two projections in \Cref{Cor:Commuting_Projections} are the ones associated to $N\bk{w_2w_1w_2}$ and $N\bk{w_4w_5w_4}$.
	The decomposition which follows is
	\[
	\Ind_B^G\lambda_0 = \bigoplus_{X\subseteq \set{2,5}} \Ind _{P_{1,5}}^G \St_X \otimes \chi_0 .
	\]
\end{enumerate}
\end{Ex}

These examples shows that in order for the intertwining operators to commute, the choice of vertices $i_1$,...,$i_l$ in the diagram and the set of balls $B_{1}\bk{r}$,...,$B_{l}\bk{r}$ of radius $r$ around them should satisfy the following conditions:
\begin{enumerate}
	\item $B_j\bk{1}\setminus\set{\alpha_j}$ for any $1\leq j\leq n$.
	\item $B_j\bk{1}\cap B_k\bk{1}=\emptyset$ for all $j\neq k$.
	\item For any $j$ there exist at most one $k$ such that $B_j\bk{2}\cap B_k\bk{2}\neq \emptyset$, in which case $\coset{B_j\bk{1}\cup B_k\bk{1}}\setminus\coset{\set{\alpha_{i_j},\alpha_{i_k}}\cup\bk{B_j\bk{2}\cap B_k\bk{2}}}\neq \emptyset$.
	%	\item For any $j$ $\#\coset{B_j\bk{2}\cap\bk{\cup_{i\neq j}B_i\bk{1}}}\leq 1$
\end{enumerate}
%This choice of simple roots will allow choosing sets $S_1$,...,$S_l$ so that the 

We now list the possible choices of vertices in the Dynkin diagrams of type $E_n$.
We also denote the different maximal choices of $S_1$ and $S_2$.
\begin{center}
	\begin{tikzpicture}
	\dynkin[open,label,parabolic=72]{E}{6}
	\dynkincloseddot{1}
	\dynkincloseddot{5}
	\end{tikzpicture}
	\begin{tikzpicture}
	\dynkin[open,label,parabolic=40]{E}{6}
	\dynkincloseddot{1}
	\dynkincloseddot{6}
	\end{tikzpicture}
	\begin{tikzpicture}
	\dynkin[open,label,parabolic=34]{E}{6}
	\dynkincloseddot{3}
	\dynkincloseddot{6}
	\end{tikzpicture}
	\\
	Type $E_6$
\end{center}

\begin{center}
	\begin{tikzpicture}
	\dynkin[open,label,parabolic=200]{E}{7}
	\dynkincloseddot{1}
	\dynkincloseddot{5}
	\end{tikzpicture}
	\begin{tikzpicture}
	\dynkin[open,label,parabolic=156]{E}{7}
	\dynkincloseddot{1}
	\dynkincloseddot{6}
	\end{tikzpicture}
	\begin{tikzpicture}
	\dynkin[open,label,parabolic=168]{E}{7}
	\dynkincloseddot{1}
	\dynkincloseddot{6}
	\end{tikzpicture}
	\begin{tikzpicture}
	\dynkin[open,label,parabolic=92]{E}{7}
	\dynkincloseddot{1}
	\dynkincloseddot{7}	
	\end{tikzpicture}
	\begin{tikzpicture}
	\dynkin[open,label,parabolic=104]{E}{7}
	\dynkincloseddot{1}
	\dynkincloseddot{7}	
	\end{tikzpicture}
	\begin{tikzpicture}
	\dynkin[open,label,parabolic=154]{E}{7}
	\dynkincloseddot{2}
	\dynkincloseddot{6}
	\end{tikzpicture}
	\begin{tikzpicture}
	\dynkin[open,label,parabolic=90]{E}{7}
	\dynkincloseddot{2}
	\dynkincloseddot{7}
	\end{tikzpicture}
	\begin{tikzpicture}
	\dynkin[open,label,parabolic=162]{E}{7}
	\dynkincloseddot{3}
	\dynkincloseddot{6}	
	\end{tikzpicture}
	\begin{tikzpicture}
	\dynkin[open,label,parabolic=150]{E}{7}
	\dynkincloseddot{3}
	\dynkincloseddot{6}	
	\end{tikzpicture}
	\begin{tikzpicture}
	\dynkin[open,label,parabolic=86]{E}{7}
	\dynkincloseddot{3}
	\dynkincloseddot{7}	
	\end{tikzpicture}
	\begin{tikzpicture}
	\dynkin[open,label,parabolic=66]{E}{7}
	\dynkincloseddot{3}
	\dynkincloseddot{7}	
	\end{tikzpicture}
	\begin{tikzpicture}
	\dynkin[open,label,parabolic=78]{E}{7}
	\dynkincloseddot{4}
	\dynkincloseddot{7}	
	\end{tikzpicture}
	\\
	Type $E_7$
\end{center}

\begin{center}
	\begin{tikzpicture}
	\dynkin[open,label,parabolic=456]{E}{8}
	\dynkincloseddot{1}
	\dynkincloseddot{5}
	\end{tikzpicture}
	\begin{tikzpicture}
	\dynkin[open,label,parabolic=412]{E}{8}
	\dynkincloseddot{1}
	\dynkincloseddot{6}
	\end{tikzpicture}
	\begin{tikzpicture}
	\dynkin[open,label,parabolic=424]{E}{8}
	\dynkincloseddot{1}
	\dynkincloseddot{6}
	\end{tikzpicture}
	\begin{tikzpicture}
	\dynkin[open,label,parabolic=316]{E}{8}
	\dynkincloseddot{1}
	\dynkincloseddot{7}	
	\end{tikzpicture}
	\begin{tikzpicture}
	\dynkin[open,label,parabolic=348]{E}{8}
	\dynkincloseddot{1}
	\dynkincloseddot{7}	
	\end{tikzpicture}
	\begin{tikzpicture}
	\dynkin[open,label,parabolic=360]{E}{8}
	\dynkincloseddot{1}
	\dynkincloseddot{7}	
	\end{tikzpicture}
	\begin{tikzpicture}
	\dynkin[open,label,parabolic=188]{E}{8}
	\dynkincloseddot{1}
	\dynkincloseddot{8}
	\end{tikzpicture}
	\begin{tikzpicture}
	\dynkin[open,label,parabolic=220]{E}{8}
	\dynkincloseddot{1}
	\dynkincloseddot{8}
	\end{tikzpicture}
	\begin{tikzpicture}
	\dynkin[open,label,parabolic=232]{E}{8}
	\dynkincloseddot{1}
	\dynkincloseddot{8}
	\end{tikzpicture}
	\begin{tikzpicture}
	\dynkin[open,label,parabolic=410]{E}{8}
	\dynkincloseddot{2}
	\dynkincloseddot{6}	
	\end{tikzpicture}
	\begin{tikzpicture}
	\dynkin[open,label,parabolic=314]{E}{8}
	\dynkincloseddot{2}
	\dynkincloseddot{7}	
	\end{tikzpicture}
	\begin{tikzpicture}
	\dynkin[open,label,parabolic=346]{E}{8}
	\dynkincloseddot{2}
	\dynkincloseddot{7}	
	\end{tikzpicture}
	\begin{tikzpicture}
	\dynkin[open,label,parabolic=186]{E}{8}
	\dynkincloseddot{2}
	\dynkincloseddot{8}	
	\end{tikzpicture}
	\begin{tikzpicture}
	\dynkin[open,label,parabolic=218]{E}{8}
	\dynkincloseddot{2}
	\dynkincloseddot{8}	
	\end{tikzpicture}
	\begin{tikzpicture}
	\dynkin[open,label,parabolic=310]{E}{8}
	\dynkincloseddot{3}
	\dynkincloseddot{7}	
	\end{tikzpicture}
	\begin{tikzpicture}
	\dynkin[open,label,parabolic=342]{E}{8}
	\dynkincloseddot{3}
	\dynkincloseddot{7}	
	\end{tikzpicture}
	\begin{tikzpicture}
	\dynkin[open,label,parabolic=354]{E}{8}
	\dynkincloseddot{3}
	\dynkincloseddot{7}	
	\end{tikzpicture}
	\begin{tikzpicture}
	\dynkin[open,label,parabolic=182]{E}{8}
	\dynkincloseddot{3}
	\dynkincloseddot{8}	
	\end{tikzpicture}
	\begin{tikzpicture}
	\dynkin[open,label,parabolic=214]{E}{8}
	\dynkincloseddot{3}
	\dynkincloseddot{8}	
	\end{tikzpicture}
	\begin{tikzpicture}
	\dynkin[open,label,parabolic=226]{E}{8}
	\dynkincloseddot{3}
	\dynkincloseddot{8}	
	\end{tikzpicture}
	\begin{tikzpicture}
	\dynkin[open,label,parabolic=302]{E}{8}
	\dynkincloseddot{4}
	\dynkincloseddot{7}	
	\end{tikzpicture}
	\begin{tikzpicture}
	\dynkin[open,label,parabolic=334]{E}{8}
	\dynkincloseddot{4}
	\dynkincloseddot{7}	
	\end{tikzpicture}
	\begin{tikzpicture}
	\dynkin[open,label,parabolic=190]{E}{8}
	\dynkincloseddot{4}
	\dynkincloseddot{8}	
	\end{tikzpicture}
	\begin{tikzpicture}
	\dynkin[open,label,parabolic=222]{E}{8}
	\dynkincloseddot{4}
	\dynkincloseddot{8}	
	\end{tikzpicture}
	\begin{tikzpicture}
	\dynkin[open,label,parabolic=158]{E}{8}
	\dynkincloseddot{5}
	\dynkincloseddot{8}	
	\end{tikzpicture}
	\\
	Type $E_8$
\end{center}

\appendix

\section{Some Facts on Root Systems and Weyl Groups}
\label{Appendix:Facts_on_roots_systems_and_Weyl_groups}
In this section we record a few simple but useful facts about the action of the Weyl group on the root system for which we weren't able to locate a convenient reference.
We retain the notations of \Cref{Section:Preliminaries}.

\begin{Lem}
\label{App_Lem_1}
Let $w\in W$ and $\alpha\in \Delta$. If $w$ and $w_\alpha$ commute, then
\begin{align*}
& w\bk{\alpha} \in \set{\alpha, -\alpha} \\
& w\bk{\check{\alpha}} \in \set{\check{\alpha}, -\check{\alpha}} .
\end{align*}
\end{Lem}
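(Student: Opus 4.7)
The plan is to reduce both claims to the standard identity that conjugation of a reflection by a Weyl-group element yields the reflection in the rotated root. Concretely, for any $w\in W$ and any root $\beta$ one has
\[
w\, w_\beta\, w^{-1} = w_{w(\beta)}\,,
\]
since both sides act trivially on the hyperplane $w(\beta^\perp)$ and send $w(\beta)$ to $-w(\beta)$. This is the only nontrivial ingredient and it is completely standard.

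First I would apply this identity with $\beta=\alpha$. The hypothesis that $w$ and $w_\alpha$ commute rewrites as $w_\alpha = w\,w_\alpha\,w^{-1} = w_{w(\alpha)}$, so the reflections in $\alpha$ and in $w(\alpha)$ coincide. Two reflections in the reduced root system $\Phi$ agree if and only if their roots are proportional, and among roots of $\Phi$ the only multiples of $\alpha$ are $\pm\alpha$ (by the reducedness assumption coming from the Chevalley-group setting of \Cref{Section:Preliminaries}). This gives the first conclusion $w(\alpha)\in\{\alpha,-\alpha\}$.

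For the second conclusion I would invoke the $W$-equivariance of the root/coroot correspondence: for any $\beta\in\Phi$ and any $w\in W$, the coroot of $w(\beta)$ equals $w(\check\beta)$. Applying this with $\beta=\alpha$ together with the identity $\check{(-\alpha)}=-\check\alpha$, the two cases $w(\alpha)=\alpha$ and $w(\alpha)=-\alpha$ translate directly into $w(\check\alpha)=\check\alpha$ and $w(\check\alpha)=-\check\alpha$ respectively, finishing the proof.

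There is essentially no obstacle here; the only subtlety worth flagging is the appeal to reducedness of $\Phi$ when passing from ``$w_{w(\alpha)}=w_\alpha$'' to ``$w(\alpha)=\pm\alpha$''. This is automatic under the standing hypotheses of \Cref{Section:Preliminaries}, so no additional argument is needed.
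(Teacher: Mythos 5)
Your proof is correct. It does take a mildly different route from the paper's: the paper argues by direct computation, writing out $w_\alpha(w(\alpha)) = w(\alpha) - \langle w(\alpha),\check\alpha\rangle\alpha$ and $w(w_\alpha(\alpha)) = -w(\alpha)$, equating them via commutativity to get $w(\alpha) = \tfrac{1}{2}\langle w(\alpha),\check\alpha\rangle\alpha$, and then invoking reducedness; for the coroot statement it just says ``similarly,'' meaning run the same computation on the dual system. You instead package the commutativity hypothesis through the conjugation identity $w\,w_\beta\,w^{-1} = w_{w(\beta)}$, which immediately gives $w_{w(\alpha)} = w_\alpha$ and hence $w(\alpha)\in\{\pm\alpha\}$ by reducedness, and then you deduce the coroot statement from $W$-equivariance of the root--coroot correspondence rather than repeating the computation. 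The two arguments are equivalent in substance and both hinge on reducedness at the same spot; yours is a touch more conceptual and avoids the ``similarly'' hand-wave for the coroot case, while the paper's is more self-contained (no appeal to the conjugation identity as an external fact). Either is perfectly acceptable.
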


\begin{proof}
Indeed,
\begin{align*}
w_\alpha\bk{w\bk{\alpha}} &= w\bk{\alpha} - \gen{w\bk{\alpha},\check{\alpha}}\alpha \\
w\bk{w_\alpha\bk{\alpha}} &= w\bk{-\alpha} = -w\bk{\alpha} .
\end{align*}
Since $w_\alpha w = w w_\alpha$ it follows that
\[
w\bk{\alpha} = \frac{1}{2} \gen{w\bk{\alpha},\check{\alpha}} \alpha
\]
And hence, since the root system is reduced, it follows that $w\bk{\alpha}\in \set{\alpha,-\alpha}$.
Similarly $w\bk{\check{\alpha}} \in \set{\check{\alpha}, -\check{\alpha}}$.
\end{proof}

\begin{Lem}
\label{App_Lem_2}
Let $w\in W$ and $\alpha\in \Delta$.
Assume that $w$ and $w_\alpha$ commute.
Then
\[
\gen{w w_\alpha \lambda,\check{\alpha}} = \pm \gen{\lambda,\check{\alpha}} \quad\forall \lambda \in \mathfrak{a}_\R^\ast.
\]
\end{Lem}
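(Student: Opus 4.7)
The plan is to derive the statement directly from the preceding \Cref{App_Lem_1} together with the standard $W$-invariance of the canonical pairing $\langle\,\cdot\,,\,\cdot\,\rangle\colon \mathfrak{a}_\R^\ast \times \mathfrak{a}_\R \to \R$ between weights and coweights, namely $\gen{w\mu,w\check{\beta}}=\gen{\mu,\check{\beta}}$ for all $w\in W$.

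First I would rewrite the pairing by moving $w$ off the first slot. Applying $W$-invariance (with $w^{-1}$) gives
\[
\gen{ww_\alpha\lambda,\check{\alpha}} \;=\; \gen{w_\alpha\lambda,\,w^{-1}\check{\alpha}}.
\]
Since $w$ and $w_\alpha$ commute, so do $w^{-1}$ and $w_\alpha$, and hence \Cref{App_Lem_1} applies to $w^{-1}$, giving $w^{-1}\check{\alpha}\in\set{\check{\alpha},-\check{\alpha}}$.

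Next I would handle the two cases uniformly. In either case, using bilinearity and the defining identity $w_\alpha\check{\alpha}=-\check{\alpha}$ (or equivalently $\gen{w_\alpha\mu,\check{\alpha}}=-\gen{\mu,\check{\alpha}}$),
\[
\gen{w_\alpha\lambda,\,w^{-1}\check{\alpha}} \;=\; \pm\gen{w_\alpha\lambda,\check{\alpha}} \;=\; \mp\gen{\lambda,\check{\alpha}},
\]
which is precisely $\pm\gen{\lambda,\check{\alpha}}$, as required.

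There is no real obstacle here: the lemma is an immediate bookkeeping consequence of \Cref{App_Lem_1} and the $W$-invariance of the pairing. The only thing one needs to be careful about is noting that commutativity of $w$ with $w_\alpha$ transfers to commutativity of $w^{-1}$ with $w_\alpha$, so that \Cref{App_Lem_1} can indeed be applied to $w^{-1}$ in order to control $w^{-1}\check{\alpha}$ rather than $w\check{\alpha}$.
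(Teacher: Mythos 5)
Your argument is correct and is essentially the same as the paper's: both reduce $\gen{ww_\alpha\lambda,\check\alpha}$ to $\gen{w_\alpha\lambda, w^{-1}\check\alpha}$ by $W$-invariance, control $w^{-1}\check\alpha$ via \Cref{App_Lem_1} applied to $w^{-1}$ (noting $w^{-1}$ also commutes with $w_\alpha$), and finish with $w_\alpha\check\alpha = -\check\alpha$. The paper simply treats the case $w\check\alpha=\check\alpha$ explicitly and cites symmetry for the other, whereas you carry the $\pm$ sign through uniformly; the content is identical.
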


\begin{proof}
We start by noting that $w^{-1}$ also commutes with $w_\alpha$.
Assume that $w\bk{\check{\alpha}} = \check{\alpha} = w^{-1}\bk{\check{\alpha}}$ (the case $w\bk{\check{\alpha}} = -\check{\alpha}$ follows similarly).
Hence
\begin{align*}
\gen{w w_\alpha \lambda,\check{\alpha}}
& = \gen{w_\alpha\lambda,w^{-1}\check{\alpha}} \\
& = \gen{w_\alpha\lambda, \check{\alpha}} \\
& = \gen{\lambda, w_\alpha\check{\alpha}} \\
& = - \gen{\lambda,\check{\alpha}} .
\end{align*}
\end{proof}

\begin{Lem}
	\label{App_Lem_3}
	Assume that
	\[
	\set{\lambda\in \mathfrak{a}_\R^\ast\mvert \gen{w_\alpha w w_\alpha \lambda ,\check{\alpha}} =1} =
	\set{\lambda\in \mathfrak{a}_\R^\ast\mvert \gen{\lambda,\check{\alpha}}=1} .
	\]
	Then $w$ and $w_\alpha$ commute.
\end{Lem}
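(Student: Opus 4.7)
The plan is to reduce the hypothesis, which is a statement about equality of affine hyperplanes in $\mathfrak{a}_\R^\ast$, to the identity of two linear functionals on $\mathfrak{a}_\R^\ast$, and then to use the standard adjoint / $W$-equivariance properties of the pairing $\langle\cdot,\cdot\rangle$ to extract a fixed-point statement about $w$ acting on $\check\alpha$.

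First I would observe that the two sets appearing in the hypothesis are the level sets $\{\lambda : f(\lambda)=1\}$ and $\{\lambda : g(\lambda)=1\}$, where $f(\lambda)=\langle w_\alpha w w_\alpha\lambda,\check\alpha\rangle$ and $g(\lambda)=\langle\lambda,\check\alpha\rangle$ are linear functionals on $\mathfrak{a}_\R^\ast$. Since $\check\alpha\neq 0$, both level sets are nonempty affine hyperplanes. Picking any $\lambda_0$ on the common hyperplane, the equality of sets forces $\ker f = \ker g$ (take $\lambda_0+\mu$ with $\mu\in\ker g$), hence $f=cg$ for some scalar $c$; evaluating at $\lambda_0$ gives $c=1$. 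Thus $\langle w_\alpha w w_\alpha\lambda,\check\alpha\rangle = \langle\lambda,\check\alpha\rangle$ for every $\lambda\in\mathfrak{a}_\R^\ast$.

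Next, I would move $w_\alpha w w_\alpha$ to the other side of the pairing using $\langle u\lambda,\check\mu\rangle = \langle\lambda,u^{-1}\check\mu\rangle$, which yields $w_\alpha w^{-1} w_\alpha(\check\alpha)=\check\alpha$. Now I invoke $w_\alpha(\check\alpha)=-\check\alpha$ twice: from $w_\alpha w^{-1}(w_\alpha\check\alpha)=-w_\alpha w^{-1}(\check\alpha)=\check\alpha$ we get $w_\alpha w^{-1}(\check\alpha)=-\check\alpha=w_\alpha(\check\alpha)$, and cancelling $w_\alpha$ gives $w^{-1}(\check\alpha)=\check\alpha$, i.e.\ $w(\check\alpha)=\check\alpha$.

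Finally, since the $W$-action commutes with $\alpha\mapsto\check\alpha$, the identity $w(\check\alpha)=\check\alpha$ is equivalent to $w(\alpha)=\alpha$. The standard conjugation formula $w\, w_\alpha\, w^{-1}=w_{w(\alpha)}$ then gives $w w_\alpha w^{-1}=w_\alpha$, which is the desired commutation. The only (minor) obstacle is the step translating the equality of hyperplanes into equality of linear forms, but this is a one-line linear-algebra argument; the rest is a straightforward manipulation with the pairing and the known action of a simple reflection on its own coroot.
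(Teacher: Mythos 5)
Your proof is correct and follows essentially the same strategy as the paper: translate the equality of affine hyperplanes into a relation between $\check\alpha$ and $w_\alpha w^{-1} w_\alpha\check\alpha$ via the $W$-invariance of the pairing, and then conclude commutation. Your execution is in fact a bit sharper than the paper's in two small ways: by tracking the normalization you obtain $f=g$ (not merely $\ker f=\ker g$) and hence $w\check\alpha=\check\alpha$ exactly, whereas the paper only extracts $w\alpha=\pm\alpha$ from the homogeneous version of the hypothesis; and you finish with the conjugation identity $w\,w_\alpha\,w^{-1}=w_{w(\alpha)}$ rather than checking the commutator by a direct computation on $\mathfrak{a}_\R^\ast$.
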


\begin{proof}
	Fix $\lambda_0\in H_1=\set{\lambda\in \mathfrak{a}_\R^\ast\mvert \gen{\lambda,\check{\alpha}}=1}$ and consider the vector space $V=H_1-\lambda_0 = \alpha^\perp$.
	It follows that
	\[
	\set{\lambda\in \mathfrak{a}_\R^\ast\mvert \gen{w_\alpha w w_\alpha \lambda ,\check{\alpha}} =0} =
	\set{\lambda\in \mathfrak{a}_\R^\ast\mvert \gen{\lambda,\check{\alpha}}=0} .
	\]
	Namely, $\alpha^\perp=\bk{w_\alpha w w_\alpha \alpha}^\perp$.
	Since the root system is reduced, we conclude that $\alpha = \pm w_\alpha w w_\alpha \alpha$, or in other words $w\alpha = \pm \alpha$.
	It follows that $w\check\alpha=\pm\check\alpha$ (same sign).
	We show that $ww_\alpha=w_\alpha w$ by examining the action of both sides on $\mathfrak{a}_\R^\ast$.
	Indeed,
	\[
	\begin{split}
	ww_\alpha\lambda 
	& = w\bk{\lambda-\gen{\lambda,\check{\alpha}}\alpha} \\
	& = w\lambda \mp \gen{\lambda,\check{\alpha}}\alpha \\
	& = w\lambda \mp \gen{\lambda,\check{\alpha}}\alpha \\
	& = w\lambda - \gen{w\lambda,\check{\alpha}}\alpha = w_\alpha w\lambda .
%	w_\alpha w\lambda 
%	& = w\lambda - \gen{w\lambda,\check{\alpha}}\alpha \\
%	& = w\lambda \mp \gen{\lambda,\check{\alpha}}\alpha = ww_\alpha\lambda.
	\end{split}
	\]
	
\end{proof}

\begin{Lem}
	\label{App_Lem_4}
	Assume that $\lambda\in \mathfrak{a}_\R^\ast$ satisfy
	$\gen{\lambda',\check{\beta}} \leq 0$ for all $\beta\in\Delta$ and $\gen{\lambda',\check{\alpha}}=-1$.
%	$\gen{\lambda',\check{\beta}} < 0$ for all $\beta\notin S\cup\set{\alpha}$.
	Then 
	\begin{equation}
	\label{App_Lem_4_1}
	\gen{\lambda',\check{\beta}}\leq -\frac{1}{2}\gen{\alpha,\check{\beta}}\quad \forall \beta\in\Phi^{+}\setminus\set{\alpha} .
	\end{equation}
\end{Lem}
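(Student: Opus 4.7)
The plan is to reduce the inequality to a statement about the expansion of the coroot $\check\beta$ in terms of the simple coroots, and then exploit the sign information in the hypotheses.

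First, I would use the standard fact that for any $\beta\in\Phi^{+}$ the coroot $\check\beta$ is a \emph{non-negative integer} combination of the simple coroots; that is, we can write
\[
\check\beta=\sum_{\gamma\in\Delta}d_\gamma\,\check\gamma\qquad\text{with }d_\gamma\in\Z_{\geq 0}.
\]
(This is the fact that the $\{\check\gamma\}_{\gamma\in\Delta}$ are the simple roots of the dual root system.) Pairing both sides with $\lambda'$ and using the two hypotheses $\gen{\lambda',\check\alpha}=-1$ and $\gen{\lambda',\check\gamma}\leq 0$ for every $\gamma\in\Delta$, we immediately obtain
\[
\gen{\lambda',\check\beta}=-d_\alpha+\sum_{\gamma\in\Delta\setminus\{\alpha\}}d_\gamma\,\gen{\lambda',\check\gamma}\ \leq\ -d_\alpha.
\]

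The second step is to bound $\gen{\alpha,\check\beta}$ from above in terms of the same integer $d_\alpha$. Using the same expansion and the elementary facts $\gen{\alpha,\check\alpha}=2$ and $\gen{\alpha,\check\gamma}\leq 0$ for $\gamma\in\Delta\setminus\{\alpha\}$ (Cartan integers between distinct simple roots are non-positive), we get
\[
\gen{\alpha,\check\beta}=2d_\alpha+\sum_{\gamma\in\Delta\setminus\{\alpha\}}d_\gamma\,\gen{\alpha,\check\gamma}\ \leq\ 2d_\alpha,
\]
which rearranges to $-d_\alpha\leq-\tfrac{1}{2}\gen{\alpha,\check\beta}$. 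Combining with the previous display yields the desired inequality \eqref{App_Lem_4_1}.

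I expect no serious obstacle here. The only subtle point is the case $d_\alpha=0$, equivalently the case in which $\beta$ has no $\alpha$-component in its simple-root expansion. In that situation $\gen{\alpha,\check\beta}\leq 0$, so the right-hand side $-\tfrac{1}{2}\gen{\alpha,\check\beta}$ is non-negative, and the inequality follows simply from $\gen{\lambda',\check\beta}\leq 0$ (antidominance of $\lambda'$, since $\beta\in\Phi^+$ implies $\check\beta$ is a non-negative combination of simple coroots, so pairing with $\lambda'$ preserves the sign). The exclusion $\beta\neq\alpha$ is used only implicitly: it prevents us from needing the inequality in the trivial case where equality $\gen{\lambda',\check\alpha}=-1$ is already prescribed.
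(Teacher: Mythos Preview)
Your proof is correct and follows essentially the same idea as the paper's: decompose in terms of simple (co)roots and use the sign of the off-diagonal Cartan integers together with the antidominance of $\lambda'$. The only cosmetic difference is that the paper expands $\beta$ in simple roots and passes through the Euclidean inner product, whereas you expand $\check\beta$ directly in simple coroots, which makes the argument slightly more streamlined.
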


\begin{proof}
	For the sake of the proof, it is convenient to use the inner product on $V=\mathfrak{a}_\R^\ast$ underlying the pairing $V\times \check{V}\to\R$ given by $\gen{\cdot,\cdot}$.
	Indeed, 
	$\mathfrak{a}_\R^\ast$ is equipped with an inner product $\bk{\cdot,\cdot}$ space such that
	\[
	\gen{\gamma_1,\check{\gamma_2}} = 2\frac{\bk{\gamma_1,\gamma_2}}{\bk{\gamma_2,\gamma_2}} \quad \forall \gamma_1,\gamma_2\in\Phi.
	\]
	We note that for $\beta,\gamma\in\Delta$ it holds that
	\[
	\bk{\beta,\omega_\gamma} = \begin{cases} \frac{\bk{\beta,\beta}}{2},& \beta=\gamma \\ 0,& \beta\neq\gamma \end{cases} .
	\]
	
	The inequality $\gen{\lambda',\check{\beta}}\leq -\frac{1}{2}\gen{\alpha,\check{\beta}}$ is equivalent to
	\begin{equation}
	\label{App_Lem_4_2}
	\bk{\lambda',\beta}\leq -\frac{1}{2}\bk{\alpha,\beta} .
	\end{equation}
	\Cref{App_Lem_4} follows from:
	
	\underline{Claim:} 
	\begin{equation}
	\label{App_Lem_4_3}
	\suml_{\gamma\in\Delta\setminus\bk{S\cup\set{\alpha}}} n_\gamma\bk{\beta} \frac{\bk{\gamma,\gamma}}{2} m_\gamma\bk{\lambda'} \leq 
	\frac{n_\alpha\bk{\beta} \cdot \bk{\alpha,\alpha}-\bk{\alpha,\beta}}{2}.
	%	n_\alpha\bk{\beta} \frac{\bk{\alpha,\alpha}}{2}-\frac{1}{2}\bk{\alpha,\beta}
	\end{equation}
	
	Indeed, \Cref{App_Lem_4_3} holds since its left-hand side is non-positive while its right-hand side is non-negative:
	\begin{itemize}
		\item By assumption, $m_\gamma\bk{\lambda'}=\bk{\lambda',\gamma}<0$ for any $\gamma\in\Delta\setminus\bk{S\cup\set{\alpha}}$ and $n_\gamma\bk{\beta}\geq 0$ for all $\gamma\in\Delta$.
		Hence, the left-hand side is non-positive.
		
		%		The left-hand-side of this inequality is non-positive as $\gen{\lambda',\check{\beta}} < 0$ for all $\beta\notin S\cup\set{\alpha}$.
		
		\item Note that
		\[
		\frac{\bk{\alpha,\beta}}{\bk{\alpha,\alpha}} = \suml_{\gamma\in\Delta} n_\gamma\bk{\beta} \frac{\bk{\alpha,\gamma}}{\bk{\alpha,\alpha}} \leq n_{\alpha}\bk{\beta} ,
		\]
		since $\bk{\alpha,\gamma}\leq 0$ for all $\gamma\in\Delta\setminus\set{\alpha}$ and $n_\gamma\bk{\beta}\geq 0$ for all $\gamma\in\Delta$.
		It follows that $n_\alpha\bk{\beta} \cdot \bk{\alpha,\alpha}-\bk{\alpha,\beta}\geq 0$.
	\end{itemize}
	
	We show that \Cref{App_Lem_4_3} is equivalent to \Cref{App_Lem_4_2}.

	Write $\displaystyle \beta = \suml_{\gamma\in\Delta} n_\gamma\bk{\beta}\gamma$ and $\displaystyle \lambda' = \suml_{\gamma\in\Delta} m_\gamma\bk{\lambda'}\omega_\gamma$.
	Also, let $S=\set{\gamma\in\Delta\mvert \bk{\lambda',\gamma}=0}$.
	Then
	\[
	\begin{split}
	\bk{\lambda',\beta} 
	& = \suml_{\gamma\in\Delta} n_\gamma\bk{\beta} \bk{\lambda',\gamma} \\
	& = \suml_{\gamma\in\Delta} n_\gamma\bk{\beta} \frac{\bk{\gamma,\gamma}}{2} m_\gamma\bk{\lambda'} \\
	& = \suml_{\gamma\in\Delta\setminus\bk{S\cup\set{\alpha}}} n_\gamma\bk{\beta} \frac{\bk{\gamma,\gamma}}{2} m_\gamma\bk{\lambda'} - n_\alpha\bk{\beta} \frac{\bk{\alpha,\alpha}}{2} \\
	\end{split}
	\]
	Plugging this into \Cref{App_Lem_4_2} yields \Cref{App_Lem_4_3}.
%	We plug this into the inequality $\bk{\lambda',\beta}\leq -\frac{1}{2}\bk{\alpha,\beta}$ yielding the equivalent inequality
%	\[
%	\suml_{\gamma\in\Delta\setminus\bk{S\cup\set{\alpha}}} n_\gamma\bk{\beta} \frac{\bk{\gamma,\gamma}}{2} m_\gamma\bk{\lambda'} \leq 
%	\frac{n_\alpha\bk{\beta} \cdot \bk{\alpha,\alpha}-\bk{\alpha,\beta}}{2}.
%%	n_\alpha\bk{\beta} \frac{\bk{\alpha,\alpha}}{2}-\frac{1}{2}\bk{\alpha,\beta}
%	\]
%	
%	This inequality holds due to:
	
%	\underline{Claim:} The left-hand side is non-positive while the right-hand side is non-negative.

\end{proof}

\bibliographystyle{alpha}
\bibliography{Singularities}

%\newpage
%\listoftodos [TODO LIST - \arabic{todocounter} TODOS]

\end{document}